\newtheorem{theorem}{Theorem}
\newtheorem{lemma}{Lemma}
\begin{document}

%

%

\twocolumn[

\aistatstitle{Fast Markov chain Monte Carlo algorithms via Lie groups}

\aistatsauthor{Steve Huntsman}

\aistatsaddress{BAE Systems FAST Labs} 
]

\begin{abstract}
From basic considerations of the Lie group that preserves a target probability measure, we derive the Barker, Metropolis, and ensemble Markov chain Monte Carlo (MCMC) algorithms, as well as variants of waste-recycling Metropolis-Hastings and an altogether new MCMC algorithm. We illustrate these constructions with explicit numerical computations, and we empirically demonstrate on a spin glass that the new algorithm converges more quickly than its siblings.
\end{abstract}

\section{\label{sec:Introduction}Introduction}

The basic problem that \emph{Markov chain Monte Carlo} (MCMC) algorithms solve is to estimate expected values using a Markov chain that has the desired probability measure as its invariant measure. Originally developed to solve problems in computational physics, MCMC algorithms have since become omnipresent in statistical inference and machine learning. Indeed, it is reasonable to suggest in line with \cite{richey2010mcmc} and \cite{brooks2011mcmc} that MCMC algorithms comprise the most ubiquitous and important class of high-level numerical algorithms discovered to date.

Consequently, the literature on MCMC algorithms is vast. However, the mostly unexplored interface of MCMC algorithms and the theory of Lie groups and Lie algebras holds a surprise. As we shall see, the space of transition matrices with a given invariant measure is a \emph{monoid} that is closely related to a Lie group. Searching for elements of this monoid with closed form expressions naturally leads to the classical Barker and Metropolis MCMC samplers. Generalizing this search leads to higher-order versions of these samplers which respectively correspond to the ensemble MCMC algorithm of \cite{neal2010ensemble} and an algorithm of \cite{delmas2009does}. Further generalizing this search leads to an algorithm which we call the \emph{higher-order programming solver} and whose convergence appears to improve on the state of the art. For each of these algorithms (only treated for finite state spaces), the acceptance mechanism is specified, but the proposal mechanism is not (though one can always extend any proposal mechanism for single states to multiple states by repeated sampling).

In this paper, we first review the basics of MCMC, Lie theory, and related work in \S \ref{sec:Background}. Next, we introduce the Lie group generated by a probability measure in \S \ref{sec:Group}. Here, Lemma \ref{lem:STOalgebra} exhibits a convenient basis of the stochastic Lie algebra. Lemma \ref{lem:power} and Theorem \ref{thm:GENbasis} next yield a convenient basis of the Lie subalgebra that annihilates a target probability measure $p$. Critically, this basis only requires knowledge of $p$ up to a multiplicative factor. 
In \S \ref{sec:Monoid}, we consider a closely related monoid, and Lemma \ref{lem:pos} shows how we can analytically produce nonnegative transition matrices that leave $p$ invariant. We then exhibit the construction of the Barker and Metropolis samplers from Lie-theoretic considerations in \S \ref{sec:BarkerMetropolis}. In \S \ref{sec:Algebra}, Lemma \ref{lem:abcX} extends Lemma \ref{lem:power}, and Theorem \ref{thm:dxd} extends Theorem \ref{thm:GENbasis} in such a way as to yield generalizations of the preceding samplers that entertain multiple proposals at once. These higher-order Barker and Metropolis samplers are explicitly constructed in \S \ref{sec:HigherOrder}. We then demonstrate their behavior on a simple example of a spin glass in \S \ref{sec:Behavior}. In \S \ref{sec:LP}, Theorem \ref{thm:LP} yields multiple-proposal transition matrices that are closest in Frobenius norm to the ``ideal'' transition matrix $1p$: in this section we introduce and demonstrate the resulting higher-order programming solver. Finally, we close with remarks in \S \ref{sec:Remarks}. 
Proofs, though brief, are relegated to Appendix \ref{sec:Proofs}.



\section{\label{sec:Background}Background}

\subsection{\label{sec:MCMC}Markov chain Monte Carlo}

As mentioned in \S \ref{sec:Introduction} and \cite{bremaud1999markov}, the basic problem of MCMC is to estimate expected values of functions with respect to a probability measure $p$ that is infeasible to construct. A common instance is where $p_j \equiv \mathcal{L}_j/Z$, where it is easy to compute $\mathcal{L}$ but hard to compute the normalizing constant $Z$ due to the scale of the problem. The approach of MCMC is to construct an irreducible, ergodic Markov chain that has $p$ as its invariant measure without using global information. 

If now $X_t$ is the state of such a chain at time $t$, then in the limit we have $X_t \sim p$ for any initial condition. For $f$ suitable, $\mathbb{E}_p f(X) = \lim_{t \rightarrow \infty} \frac{1}{t} \sum_{j=1}^t f(X_j)$
even though the $X_j$ are correlated.

A MCMC algorithm typically depends on respective \emph{proposal} and \emph{acceptance} probabilities $q_{jk} := \mathbb{P}(X' = k | X_t = j)$ and $\alpha_{jk} := \mathbb{P}(X_{t+1} = k | X' = k, X_t = j)$, which yield
$P_{jk} := \mathbb{P}(X_{t+1} = k | X_t = j) = q_{jk} \alpha_{jk}$
for the elements of the chain transition matrix.


The Hastings algorithm uses a symmetric matrix $s$ to accept a proposal with probability $\alpha_{jk} = \frac{s_{jk}}{1+t_{jk}}$, where $t_{jk} := \frac{p_j q_{jk}}{p_k q_{kj}}$. This requires $s_{jk} \le 1 + \min(t_{jk},t_{kj})$. Taking $s_{jk} = 1$ yields the Barker sampler; \cite{peskun1973optimum} shows that the optimal choice $s_{jk} = 1 + \min(t_{jk},t_{kj})$ yields the Metropolis-Hastings sampler.

\subsection{\label{sec:Lie}Lie groups and Lie algebras}

For general background on Lie groups and algebras, we refer to 
\cite{onishchik1990lie} and \cite{kirillov2008lie}. Here, we briefly restate the basic concepts in the real and finite-dimensional setting. 

A \emph{Lie group} is a group that is also a manifold, and for which the group operations are smooth. The tangent space of a Lie group $G$ at the identity is a \emph{Lie algebra} that we denote by $\frak{lie}(G)$. Besides its vector space structure, this Lie algebra inherits a version of the Lie group structure through a bilinear antisymmetric \emph{bracket} $[\cdot, \cdot]$ that satisfies the \emph{Jacobi identity}
\begin{equation}
[X,[Y,Z]] + [Y,[Z,X]] + [Z,[X,Y]] = 0. \nonumber
\end{equation}
In particular, Ado's theorem implies that a real finite-dimensional Lie group is isomorphic to a subgroup of the group $GL(n, \mathbb{R})$ of invertible $n \times n$ matrices over $\mathbb{R}$. Meanwhile, the corresponding Lie algebra is isomorphic to a Lie subalgebra of real $n \times n$ matrices, for which the bracket is the usual matrix commutator: $[X,Y] := XY-YX$. In the other direction, the usual matrix exponential gives a map from a matrix Lie algebra to the corresponding Lie group that respects both the algebra and group structures.

\subsection{\label{sec:Related}Related work}

Our higher-order Barker and Metropolis samplers respectively correspond to constructions in \cite{neal2010ensemble} and \cite{delmas2009does}. Besides ensemble algorithms, \cite{robert2018mcmc} details a large body of work on accelerating MCMC algorithms by techniques such as multiple try algorithms as in \cite{liu2000multiple}, \cite{martino2018MT}, and \cite{martino2018IRSM}; or by parallelization, as in \cite{calderhead2014parallel}. 

Work by \cite{niepert2012markov}, \cite{niepert2012mcmc}, \cite{bui2013automorphism}, \cite{shariff2015symmetries}, \cite{vandenbroeck2015lifted} and \cite{anand2016contextual} dealt with accelerating MCMC algorithms by exploiting discrete symmetries that preserve the (exact or approximate) level sets of a target measure. In a related vein, ``group moves'' for MCMC algorithms were considered by \cite{liu1999parameter} and \cite{liu2000generalised}. There is also a long tradition of learning and exploiting symmetries in data representations for machine learning (see, e.g. \cite{ludtke2018abstractions}, and \cite{anselmi2019symmetry}): including for neural networks, for which see \cite{cohen2016equivariant} and \cite{cohen2018spherical}. However, to our knowledge, the present paper is the first attempt to consider continuous symmetries that preserve a target measure in the context of MCMC.

The study of Markov models on groups has been studied in considerable depth, as in \cite{saloffcoste2001random} and \cite{ceccherini2008harmonic}. However, although the idea of applying Lie theory to Markov models motivates work on the stochastic group, actual applications themselves are few and far between, with \cite{sumner2012liemarkov} serving as an exemplar. 
 
If we sacrifice analytical tractability and/or computational convenience, it is possible to consider generic MCMC algorithms that optimize some criterion over the relevant monoid. Optimal control considerations lead to algorithms such as those of \cite{suwa2010detailed}, \cite{chen2013accelerating}, \cite{bierkens2016nonreversible}, and \cite{takahashi2016detailed} that optimize convergence at the cost of reversibility/detailed balance. Alternatively, \cite{frigessi1992optimal}, \cite{pollet2004optimal}, \cite{chen2012optimal}, \cite{wu2015optimal}, and \cite{huang2018optimal} try to optimize the asymptotic variance.

\section{\label{sec:Group}The Lie group generated by a measure}

For $1 < n \in \mathbb{N}$, let $p$ be a probability measure on $[n] := \{1,\dots,n\}$. Relying on context to resolve ambiguity, write $1 = (1,\dots,1)^T \in \mathbb{R}^n$. Following \cite{johnson1985markov}, \cite{poole1995stochastic}, \cite{boukas2015lie}, and \cite{guerra2018stochastic}, define the \emph{stochastic group} 
\begin{equation}
\label{eq:STO}
STO(n) := \{P \in GL(n,\mathbb{R}): P1 = 1 \}
\end{equation}
as the stabilizer fixing $1$ on the left in $GL(n,\mathbb{R})$, and
\begin{equation}
\label{eq:GEN}
\langle p \rangle := \{P \in STO(n) : pP = p \}
\end{equation}
as the stabilizer fixing $p$ on the right: we call $\langle p \rangle$ the \emph{group generated by $p$}. $STO(n)$ and $\langle p \rangle$ are Lie groups of respective dimension $n(n-1)$ and $(n-1)^2$.

If $P \in STO(n)$ is irreducible and ergodic, then it has a unique invariant measure that we write as 
$\langle P \rangle := 1^T(P-I+11^T)^{-1}$, 
so that $pP = p$ iff $\langle p \rangle = \langle \langle P \rangle \rangle$. 
We have that $\langle p \rangle - I \subset \frak{lie} ( \langle p \rangle ) \subset \frak{lie}(STO(n))$.

For $(j,k) \in [n] \times [n-1]$, define 
\begin{equation}
\label{eq:STObasis}
e_{(j,k)} := e_j(e_k^T-e_n^T), 
\end{equation}
where $\{e_j\}_{j \in [n]}$ is the standard basis of $\mathbb{R}^n$.

\begin{lemma}
\label{lem:STOalgebra}
The matrices $\{e_{(j,k)}\}_{(j,k) \in [n] \times [n-1]}$ form a basis of $\frak{lie}(STO(n))$ and $\left[ e_{(j,k)}, e_{(\ell,m)} \right]$ equals
\begin{equation}
\label{eq:STOcommutator}
(\delta_{k\ell} - \delta_{n\ell})e_{(j,m)} - (\delta_{mj} - \delta_{nj})e_{(\ell,k)}.
\end{equation}
\end{lemma}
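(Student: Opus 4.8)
The plan is to identify $\frak{lie}(STO(n))$ explicitly, verify by a dimension count that the $e_{(j,k)}$ form a basis of it, and then read off the bracket directly from the rank-one (outer product) form of these matrices.

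First I would compute the Lie algebra. Differentiating the constraint $P(t)1 = 1$ along a smooth curve in $STO(n)$ through the identity shows that every tangent vector $X = \dot P(0)$ satisfies $X1 = 0$; conversely, if $X1 = 0$ then $\exp(tX)1 = 1$ for all $t$, so $\exp(tX) \in STO(n)$. Hence $\frak{lie}(STO(n))$ is exactly the set of real $n \times n$ matrices whose rows each sum to zero. This set is manifestly a linear subspace, it is closed under the commutator (if $X1 = Y1 = 0$ then $[X,Y]1 = X(Y1) - Y(X1) = 0$), and it has dimension $n(n-1)$, consistent with the dimension of $STO(n)$ recorded above.

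Next I would check the basis claim. Each $e_{(j,k)} = e_j(e_k^T - e_n^T)$ has $e_{(j,k)}1 = e_j(e_k^T 1 - e_n^T 1) = e_j(1-1) = 0$, so it lies in $\frak{lie}(STO(n))$. For linear independence, note that $e_{(j,k)}$ is supported entirely in row $j$, and for each fixed $j$ the $n-1$ row vectors $e_k^T - e_n^T$, $k \in [n-1]$, are linearly independent (they span the hyperplane of row vectors annihilating $1$). Since matrices supported in distinct rows cannot cancel one another, the whole family $\{e_{(j,k)}\}_{(j,k)\in[n]\times[n-1]}$ of $n(n-1)$ matrices is linearly independent, hence a basis of the $n(n-1)$-dimensional space $\frak{lie}(STO(n))$.

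Finally, for the bracket I would just multiply the outer products associatively. The scalar $(e_k^T - e_n^T)e_\ell = \delta_{k\ell} - \delta_{n\ell}$ gives $e_{(j,k)}e_{(\ell,m)} = (\delta_{k\ell} - \delta_{n\ell})\,e_j(e_m^T - e_n^T) = (\delta_{k\ell} - \delta_{n\ell})\,e_{(j,m)}$, and the symmetric computation gives $e_{(\ell,m)}e_{(j,k)} = (\delta_{mj} - \delta_{nj})\,e_{(\ell,k)}$; subtracting yields the stated formula. Since $k,m \in [n-1]$, the matrices $e_{(j,m)}$ and $e_{(\ell,k)}$ are again members of the declared basis, so the formula also serves as a consistency check that the subspace is bracket-closed. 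I do not anticipate a real obstacle here: the computation is routine once the Lie algebra has been identified, and the only things to be careful about are keeping the index ranges straight (so that the output of the bracket stays in the span of the basis, which forces the second index to run only over $[n-1]$) and, if one does not wish to invoke the stated dimension of $STO(n)$, establishing the reverse inclusion $\{X : X1 = 0\} \subseteq \frak{lie}(STO(n))$ via the exponential as above.
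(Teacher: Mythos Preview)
Your proof is correct and follows the same approach as the paper: the key computation $e_{(j,k)}e_{(\ell,m)} = (\delta_{k\ell} - \delta_{n\ell})e_{(j,m)}$ is exactly what the paper uses, and the commutator follows by swapping $(j,k)\leftrightarrow(\ell,m)$. You have simply supplied the details of the basis claim (identifying $\frak{lie}(STO(n))$ as $\{X : X1 = 0\}$ and checking linear independence) that the paper's proof leaves implicit.
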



This basis has the obvious advantage of computationally trivial decompositions.

For $j,k \in [n-1]$, define $r_j := p_j/p_n$ and
\begin{equation}
\label{eq:GENbasis}
e_{(j,k)}^{(p)} := e_{(j,k)} - r_j e_{(n,k)} = \left ( e_j - r_j e_n \right ) (e_k^T-e_n^T).
\end{equation}
Observe that if $p_j \equiv \mathcal{L}_j/Z$, then $r_j = \mathcal{L}_j/\mathcal{L}_n$ does not depend on $Z$. This is why MCMC methods allow us to avoid computing such normalization factors, which in turn is why MCMC methods are useful. 

For future reference, we define $r := (r_1,\dots,r_{n-1},1)$ and $r^- := (r_1,\dots,r_{n-1})$.

\begin{lemma}
\label{lem:power}
For $i \in \mathbb{N}$,
\begin{equation}
\left (e_{(j,k)}^{(p)} \right )^i = \begin{cases} I, & i = 0; \\ \left ( \delta_{jk} + r_j \right )^{i-1} e_{(j,k)}^{(p)}, & i > 0. \end{cases}
\end{equation}
\end{lemma}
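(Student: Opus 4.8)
The plan is to exploit that $e_{(j,k)}^{(p)}$ is a rank-one matrix. Set $u := e_j - r_j e_n$ and $v := e_k - e_n$, so that \eqref{eq:GENbasis} reads $e_{(j,k)}^{(p)} = uv^T$. For arbitrary column vectors $u,v \in \mathbb{R}^n$ and any integer $i \ge 1$, a one-line induction gives $(uv^T)^i = (v^Tu)^{i-1}\,uv^T$: indeed $(uv^T)(uv^T) = u(v^Tu)v^T = (v^Tu)\,uv^T$ by associativity, since $v^Tu$ is a scalar. So the entire content of the lemma for $i>0$ is the evaluation of the scalar $v^Tu$, and the case $i=0$ is just the convention $M^0 = I$.

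Next I would expand
\begin{equation}
v^Tu = (e_k - e_n)^T(e_j - r_j e_n) = e_k^Te_j - r_j\, e_k^Te_n - e_n^Te_j + r_j\, e_n^Te_n. \nonumber
\end{equation}
Using orthonormality of $\{e_\ell\}_{\ell\in[n]}$ together with the hypothesis $j,k \in [n-1]$ (so $k \ne n$ and $j \ne n$), the two cross terms vanish, $e_k^Te_j = \delta_{jk}$, and $e_n^Te_n = 1$, leaving $v^Tu = \delta_{jk} + r_j$. Substituting into the rank-one power identity yields $\left(e_{(j,k)}^{(p)}\right)^i = (\delta_{jk}+r_j)^{i-1} e_{(j,k)}^{(p)}$ for $i>0$, as claimed.

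There is essentially no obstacle here; the only point requiring any care is invoking $j,k \le n-1$ when discarding $e_k^Te_n$ and $e_n^Te_j$, which is precisely where the index restriction built into \eqref{eq:STObasis} and \eqref{eq:GENbasis} is used. One could instead argue that $e_{(j,k)}^{(p)}$ acts as multiplication by $\delta_{jk}+r_j$ on its (one-dimensional) column space and as $0$ on a complementary invariant subspace, but the rank-one computation above is the most economical route and I would present it that way.
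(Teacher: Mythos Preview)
Your proof is correct and follows essentially the same route as the paper: both exploit the rank-one factorization $e_{(j,k)}^{(p)} = (e_j - r_j e_n)(e_k^T - e_n^T)$ and reduce the power computation to evaluating the inner scalar $(e_k - e_n)^T(e_j - r_j e_n) = \delta_{jk} + r_j$, using $j,k \ne n$ to kill the cross terms. The only difference is that the paper records the slightly more general identity $e_{(j,k)}^{(p)} e_{(\ell,m)}^{(p)} = (\delta_{k\ell} + r_\ell)\, e_{(j,m)}^{(p)}$ before specializing, since that product formula is reused later.
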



\begin{theorem}
\label{thm:GENbasis}
The $e_{(j,k)}^{(p)}$ form a basis for $\frak{lie} ( \langle p \rangle )$ and
\begin{equation}
\label{eq:GENcommutator}
\left[ e_{(j,k)}^{(p)}, e_{(\ell,m)}^{(p)} \right] = \left ( \delta_{k\ell} + r_\ell \right ) e_{(j,m)}^{(p)} - \left ( \delta_{mj} + r_j \right ) e_{(\ell,k)}^{(p)}.
\end{equation}
\end{theorem}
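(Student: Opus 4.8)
The plan is to establish the three assertions in turn — that each $e_{(j,k)}^{(p)}$ belongs to $\frak{lie}(\langle p \rangle)$, that the $(n-1)^2$ of them are linearly independent (hence a basis, by the dimension count already recorded for $\langle p \rangle$), and the bracket relation \eqref{eq:GENcommutator} — exploiting throughout the rank-one factorization $e_{(j,k)}^{(p)} = u_j v_k$ with $u_j := e_j - r_j e_n$ and $v_k := e_k^T - e_n^T$ from \eqref{eq:GENbasis}.

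For membership, first note $e_{(j,k)}^{(p)} 1 = u_j (e_k^T 1 - e_n^T 1) = 0$ and $p\, e_{(j,k)}^{(p)} = (p_j - r_j p_n) v_k = 0$, the latter because $r_j = p_j / p_n$. Hence for $|t|$ small the curve $t \mapsto I + t\, e_{(j,k)}^{(p)}$ lies in $GL(n,\mathbb{R})$, fixes $1$ on the left, and fixes $p$ on the right, i.e.\ it is a curve through $I$ in $\langle p \rangle$; its velocity $e_{(j,k)}^{(p)}$ at $t = 0$ therefore lies in $\frak{lie}(\langle p \rangle)$. (Equivalently, one may use Lemma \ref{lem:power} to write $\exp(t\, e_{(j,k)}^{(p)})$ in closed form and check it lies in $\langle p \rangle$, or simply note that $\frak{lie}(\langle p \rangle) = \{X : X1 = 0,\ pX = 0\}$.)

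For independence, rewrite $e_{(j,k)}^{(p)} = e_{(j,k)} - r_j e_{(n,k)}$ as in \eqref{eq:GENbasis} and expand a relation $\sum_{j,k \in [n-1]} c_{jk} e_{(j,k)}^{(p)} = 0$ in the basis $\{e_{(j,k)}\}_{(j,k) \in [n] \times [n-1]}$ of $\frak{lie}(STO(n))$ from Lemma \ref{lem:STOalgebra}: the contribution $\sum c_{jk} e_{(j,k)}$ uses only basis matrices with first index in $[n-1]$, the contribution $-\sum c_{jk} r_j e_{(n,k)}$ only those with first index $n$, so every $c_{jk}$ vanishes; since $\dim \frak{lie}(\langle p \rangle) = (n-1)^2$, we get a basis. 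For the bracket, use that $v_k u_\ell$ and $v_m u_j$ are scalars to get $[u_j v_k,\, u_\ell v_m] = (v_k u_\ell)\, u_j v_m - (v_m u_j)\, u_\ell v_k$, and compute $v_k u_\ell = (e_k^T - e_n^T)(e_\ell - r_\ell e_n) = \delta_{k\ell} - r_\ell \delta_{kn} - \delta_{n\ell} + r_\ell = \delta_{k\ell} + r_\ell$ for $k, \ell \in [n-1]$ (and likewise $v_m u_j = \delta_{mj} + r_j$); substituting back $u_j v_m = e_{(j,m)}^{(p)}$, $u_\ell v_k = e_{(\ell,k)}^{(p)}$ (both defined, as all indices lie in $[n-1]$) gives \eqref{eq:GENcommutator} and re-confirms closure. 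Alternatively one can expand $[e_{(j,k)} - r_j e_{(n,k)},\, e_{(\ell,m)} - r_\ell e_{(n,m)}]$ into four brackets via \eqref{eq:STOcommutator}. I anticipate no genuine obstacle; the only point needing a little care is correctly identifying $\frak{lie}(\langle p \rangle)$ as the simultaneous annihilator $\{X : X1 = 0,\ pX = 0\}$ and keeping track of which Kronecker deltas drop out because the indices are confined to $[n-1]$.
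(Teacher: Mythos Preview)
Your proposal is correct and follows essentially the same route as the paper: annihilation of $1$ and $p$, linear independence plus the dimension count $(n-1)^2$, and the bracket via the product formula $e_{(j,k)}^{(p)} e_{(\ell,m)}^{(p)} = (\delta_{k\ell}+r_\ell)\,e_{(j,m)}^{(p)}$ (your $v_k u_\ell$ computation is exactly that scalar). The only cosmetic difference is that the paper exhibits membership by summing the exponential series via Lemma~\ref{lem:power} to get $\exp(t\,e_{(j,k)}^{(p)}) = I + \frac{e^{t(\delta_{jk}+r_j)}-1}{\delta_{jk}+r_j}\,e_{(j,k)}^{(p)} \in \langle p \rangle$, whereas you use the linear curve $I + t\,e_{(j,k)}^{(p)}$ (and mention the exponential as an alternative); and the paper declares independence and the commutator ``obvious'' where you spell them out.
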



For later convenience, we write 
\begin{equation}
f_{(j,k)}^{(p)}(t) := \frac{e^{-t(\delta_{jk} + r_j)} - 1}{\delta_{jk} + r_j}. \nonumber
\end{equation}

\section{\label{sec:Monoid}The positive monoid of a measure}


Most of the elements of $STO(n)$ are not \emph{bona fide} stochastic matrices because they have negative entries; meanwhile, stochastic matrices need not be invertible. We therefore consider the \emph{monoids} (i.e., semigroups with identity; cf. \cite{hilgert1993lie})
\begin{equation}
STO^+(n) := \{P \in M(n,\mathbb{R}): P1 = 1 \text{ and } P \ge 0 \},
\end{equation}
where $P \ge 0$ is interpreted per entry, and
\begin{equation}
\langle p \rangle^+ := \{P \in STO^+(n) : pP = p \}.
\end{equation}
Note that $STO^+(n) \not \subset STO(n)$ and $\langle p \rangle^+ \not \subset \langle p \rangle$ owing to the noninvertible elements on the LHSs. Also, $STO^+(n)$ and $\langle p \rangle^+$ are bounded convex polytopes.

\begin{lemma}
\label{lem:pos}
If $t_j \ge 0$, then $\exp \left ( -\sum_j t_j e_{(j,j)}^{(p)} \right ) \in \langle p \rangle^+$.
\end{lemma}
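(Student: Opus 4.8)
The plan is to write $M := \sum_{j} t_j\, e_{(j,j)}^{(p)}$ --- the sum ranging over $j \in [n-1]$, since $e_{(j,k)}^{(p)}$ is only defined there --- as an explicit matrix, show that $-M$ has nonnegative off-diagonal entries, deduce $\exp(-M) \ge 0$ from this, and observe that the remaining requirements $\exp(-M)1 = 1$ and $p\exp(-M) = p$ are automatic.

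First I would compute $M$. By \eqref{eq:GENbasis}, $e_{(j,j)}^{(p)} = (e_j - r_j e_n)(e_j^T - e_n^T)$ has entries $1$ at $(j,j)$, $-1$ at $(j,n)$, $-r_j$ at $(n,j)$, and $r_j$ at $(n,n)$, with zeros elsewhere. Summing with weights $t_j$, the matrix $M$ is therefore supported on the diagonal together with the last row and the last column: $M_{jj} = t_j$, $M_{jn} = -t_j$, $M_{nj} = -r_j t_j$ for $j \in [n-1]$, and $M_{nn} = \sum_j r_j t_j$.

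Next, since $t_j \ge 0$ and $r_j = p_j/p_n > 0$, the only nonzero off-diagonal entries of $-M$, namely $(-M)_{jn} = t_j$ and $(-M)_{nj} = r_j t_j$, are nonnegative; that is, $-M$ is essentially nonnegative. Writing $-M = B - cI$ with $c := \max\{\max_j t_j,\, \sum_j r_j t_j\}$ makes $B$ entrywise nonnegative, and since $B$ commutes with $cI$ we get $\exp(-M) = e^{-c}\exp(B) = e^{-c}\sum_{i \ge 0} B^i/i! \ge 0$. The identities $e_{(j,j)}^{(p)}1 = 0$ and $p\,e_{(j,j)}^{(p)} = 0$ are immediate from \eqref{eq:GENbasis} (the right factor $e_j^T - e_n^T$ annihilates $1$, and $p$ annihilates the left factor $e_j - r_j e_n$), so $M1 = 0$ and $pM = 0$, hence $M^i 1 = 0$ and $pM^i = 0$ for $i \ge 1$, hence $\exp(-M)1 = 1$ and $p\exp(-M) = p$. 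Combined with $\exp(-M)\ge 0$, this gives $\exp(-M) \in \langle p \rangle^+$.

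The reason a proof is needed is that the matrices $e_{(j,j)}^{(p)}$ do not pairwise commute: by \eqref{eq:GENcommutator}, $[e_{(j,j)}^{(p)}, e_{(\ell,\ell)}^{(p)}] = r_\ell e_{(j,\ell)}^{(p)} - r_j e_{(\ell,j)}^{(p)} \ne 0$ for $j \ne \ell$. One therefore cannot factor $\exp(-M)$ as $\prod_j \exp(-t_j e_{(j,j)}^{(p)})$ and argue one factor at a time --- even though each such factor, which by Lemma \ref{lem:power} equals $I + f_{(j,j)}^{(p)}(t_j)\, e_{(j,j)}^{(p)}$, does lie in $\langle p \rangle^+$. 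Treating $M$ as a single essentially-nonnegative matrix is exactly what bypasses the non-commutativity, and this is really the only content of the argument.
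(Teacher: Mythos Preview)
Your proof is correct and follows essentially the same line as the paper's: the paper observes that $-\sum_j t_j e_{(j,j)}^{(p)}$ has nonpositive diagonal and nonnegative off-diagonal entries (a continuous-time Markov generator) and invokes the standard fact that its exponential is a stochastic matrix, whereas you spell out that fact via the shift $-M = B - cI$ and explicitly verify $\exp(-M)1 = 1$ and $p\exp(-M) = p$. Your closing remark on non-commutativity is a nice addition but not needed for the argument itself.
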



In particular, for $t \ge 0$ we have that 
\begin{equation}
\label{eq:explicitsemi1}
\exp \left (-t e_{(j,j)}^{(p)} \right ) = I + f_{(j,j)}^{(p)}(t) \cdot e_{(j,j)}^{(p)} \in \langle p \rangle^+.
\end{equation}

Unfortunately, aside from \eqref{eq:explicitsemi1}, Lemma \ref{lem:pos} does not give a way to construct explicit elements of $\langle p \rangle^+$ in closed form, or even algorithmically. This situation is an analogue of the quantum compilation problem (see \cite{dawson2005solovaykitaev}), which is by no means trivial. 

Indeed, even if the sum in the lemma's statement has only two terms, we are immediately confronted with the formidable Zassenhaus formula (see \cite{casas2012zassenhaus}).
While $\exp \left ( -t_{(j,k)} e_{(j,k)}^{(p)} - t_{(\ell,m)} e_{(\ell,m)}^{(p)} \right )$ can be evaluated in closed form with a computer algebra package, the results involve many pages of arithmetic for the case corresponding to Lemma \ref{lem:pos}, and the other possibilities all yield some negative entries.


\section{\label{sec:BarkerMetropolis}Barker and Metropolis samplers}

Despite the weak foothold that Lemma \ref{lem:pos} affords for explicit analytical constructions, we can still use \eqref{eq:explicitsemi1} to produce a MCMC algorithm parametrized by $t$. We use a simple trick of relabeling the current state as $n$ and then reversing the relabeling, so that the transition $n \rightarrow j$ becomes generic. For $P = \exp \left (-t e_{(j,j)}^{(p)} \right )$, we have $P_{jj} = 1+f_{(j,j)}^{(p)}(t)$, $P_{jn} = -f_{(j,j)}^{(p)}(t)$, $P_{nj} = -f_{(j,j)}^{(p)}(t)r_j$, and $P_{nn} = 1+f_{(j,j)}^{(p)}(t)r_j$. In particular, 
\begin{equation}
\frac{P_{jn}}{P_{nj}} = \frac{1}{r_j} = \frac{p_n}{p_j}. \nonumber
\end{equation}
That is, detailed balance is automatic.

From the point of view of convergence, the optimal value for $t$ is the one that maximizes the off-diagonal terms, i.e., $t = \infty$. Here we get $P_{jj} = \frac{r_j}{1+r_j}$, $P_{jn} = \frac{1}{1+r_j}$, $P_{nj} = \frac{r_j}{1+r_j}$, and $P_{nn} = \frac{1}{1+r_j}$. The corresponding MCMC algorithm is a Barker sampler. 

In light of \eqref{eq:explicitsemi1}, we can improve on the Barker sampler almost trivially. We have that $I - \tau e_{(j,j)}^{(p)} \in \langle p \rangle^+$ iff $0 \le \tau \le \min(1,r_j^{-1})$. But 
\begin{equation}
\label{eq:metropolis}
\left ( I - \min(1,r_j^{-1}) \cdot e_{(j,j)}^{(p)} \right )_{nj} = \min(1,r_j)
\end{equation}
is just the Metropolis acceptance ratio. That is, we have derived the Barker and Metropolis samplers from basic considerations of symmetry and (in the latter case) optimality. 

\begin{algorithm}[tb]
   \caption{Metropolis}
   \label{alg:metropolis}
\begin{algorithmic}
   \STATE {\bfseries Input:} Runtime $T$ and oracle for $r$
   \STATE Initialize $t=0$ and $X_0$
   \REPEAT
   \STATE Relabel states so that $X_t = n$
   \STATE Propose $j \in [n-1]$
   \STATE Accept $X_{t+1} = j$ with probability \eqref{eq:metropolis}
   \STATE Undo relabeling; set $t = t+1$
   \UNTIL{$t = T$}
   \STATE {\bfseries Output:} $\{X_t\}_{t=0}^T \sim p^{\times (T+1)}$ (approximately)
   \end{algorithmic}
\end{algorithm}

Note that the proposal mechanism that selects the state $j$ is unspecified and unconstrained by our construction (i.e., $\mathcal{J}$ can be drawn from an arbitrary joint distribution on the subset of $[n-1]^d$ without duplicate entries). That is, our approach separates concerns between the proposal and acceptance mechanisms, and only focuses on the latter. In later sections of this paper, the proposal mechanism that selects a set of states will similarly be unspecified and unconstrained. However, in \S \ref{sec:Behavior} we select the elements of proposal sets uniformly at random without replacement for illustrative purposes. That said, a good proposal mechanism is of paramount importance for MCMC algorithms.

\section{\label{sec:Algebra}Some algebra}

The Barker and Metropolis samplers can be regarded as among the very ``simplest'' MCMC methods in the sense that \eqref{eq:explicitsemi1} is among the very sparsest possible nontrivial matrices in $\langle p \rangle^+$. This suggests the question: what happens if we are willing to sacrifice some sparsity? In other words, what if we consider possible transitions to more than one state? It is natural to expect both better convergence and increased complexity. The (utterly impractical and degenerate) limiting case is the matrix $1p$, and the practical starting case is the Barker and Metropolis samplers. Meanwhile, it is also natural to wonder how (or if) we can analytically construct more general elements of $\langle p \rangle^+$ than \eqref{eq:explicitsemi1}.

The following generalization of Lemma \ref{lem:power} is the first step toward an answer to the preceding questions. For $\mathcal{J} := \{j_1,\dots,j_d\} \subseteq [n-1]$ and a matrix $\alpha \in M_{n-1,n-1}$, define $\alpha_{(\mathcal{J})} \in M_{d,d}$ by $(\alpha_{(\mathcal{J})})_{uv} := \alpha_{j_u j_v}$, $\alpha_{(\mathcal{J})}^{(p)} := \sum_{u,v = 1}^d \alpha_{j_u j_v} e_{(j_u,j_v)}^{(p)} \in \mathfrak{lie}(\langle p \rangle)$, and $r_{(\mathcal{J})} := (r_{j_1}, \dots, r_{j_d})$.
\begin{lemma}
\label{lem:abcX} Let $\mathcal{J} := \{j_1,\dots,j_d\} \subseteq [n-1]$. If $\gamma_{(\mathcal{J})}^{(p)} = \alpha_{(\mathcal{J})}^{(p)} \beta_{(\mathcal{J})}^{(p)}$, 
then
\begin{equation}
\label{eq:alphabeta}
\gamma_{(\mathcal{J})} = \alpha_{(\mathcal{J})} (I+1r_{(\mathcal{J})}) \beta_{(\mathcal{J})}.
\end{equation}
\end{lemma}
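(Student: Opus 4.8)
The plan is to reduce everything to the rank-one structure of the generators. Recall from \eqref{eq:GENbasis} that $e_{(j,k)}^{(p)} = (e_j - r_j e_n)(e_k^T - e_n^T)$ is an outer product of the column vector $e_j - r_j e_n$ with the row vector $e_k^T - e_n^T$. So I would write $\alpha_{(\mathcal{J})}^{(p)} = \sum_{u,v=1}^d \alpha_{j_u j_v}(e_{j_u} - r_{j_u} e_n)(e_{j_v}^T - e_n^T)$ and similarly for $\beta_{(\mathcal{J})}^{(p)}$, so that the product $\alpha_{(\mathcal{J})}^{(p)}\beta_{(\mathcal{J})}^{(p)}$ becomes a fourfold sum in which each term carries a scalar inner product $(e_{j_v}^T - e_n^T)(e_{j_w} - r_{j_w} e_n)$ sandwiched between an outer product $(e_{j_u} - r_{j_u} e_n)(e_{j_x}^T - e_n^T) = e_{(j_u, j_x)}^{(p)}$.

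First I would evaluate that inner product. Since $\mathcal{J} \subseteq [n-1]$ consists of distinct indices, $e_{j_v}^T e_{j_w} = \delta_{vw}$; since $j_v, j_w \le n - 1$, both $e_{j_v}^T e_n$ and $e_n^T e_{j_w}$ vanish; and $e_n^T e_n = 1$. Hence $(e_{j_v}^T - e_n^T)(e_{j_w} - r_{j_w} e_n) = \delta_{vw} + r_{j_w}$, which is precisely the $(v,w)$ entry of $I + 1 r_{(\mathcal{J})}$. Substituting this back and collecting the coefficient of $e_{(j_u, j_x)}^{(p)}$ gives $\alpha_{(\mathcal{J})}^{(p)}\beta_{(\mathcal{J})}^{(p)} = \sum_{u,x=1}^d \left[ \alpha_{(\mathcal{J})}(I + 1 r_{(\mathcal{J})})\beta_{(\mathcal{J})} \right]_{ux} e_{(j_u, j_x)}^{(p)}$. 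By Theorem \ref{thm:GENbasis} the $e_{(j,k)}^{(p)}$ are linearly independent, so the coefficients in any expansion of $\gamma_{(\mathcal{J})}^{(p)}$ over the generators indexed by $\mathcal{J} \times \mathcal{J}$ are unique; comparing with $\gamma_{(\mathcal{J})}^{(p)} = \sum_{u,x} \gamma_{j_u j_x} e_{(j_u, j_x)}^{(p)}$ then yields \eqref{eq:alphabeta}.

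There is no real obstacle here: the argument is pure bookkeeping once the rank-one form of the generators is in hand. The only points demanding care are that the entries of $\mathcal{J}$ are distinct and at most $n-1$ (so that the cross terms in the inner product drop out and $e_{j_v}^T e_{j_w} = \delta_{vw}$), and the appeal to linear independence of the $e_{(j,k)}^{(p)}$ to pin down $\gamma_{(\mathcal{J})}$ uniquely. One can regard the whole computation as the matrix-valued analogue of the scalar identity behind Lemma \ref{lem:power}, with the single factor $\delta_{jk} + r_j$ replaced by the matrix $I + 1 r_{(\mathcal{J})}$.
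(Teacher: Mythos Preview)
Your proposal is correct and follows essentially the same route as the paper: both expand the product into a fourfold sum and use the scalar identity $(e_{j_v}^T - e_n^T)(e_{j_w} - r_{j_w} e_n) = \delta_{vw} + r_{j_w}$ (which the paper packages as \eqref{eq:GENproduct}) to collapse it to the claimed form. Your explicit appeal to the linear independence from Theorem \ref{thm:GENbasis} to pin down $\gamma_{(\mathcal{J})}$ is a nice touch that the paper leaves implicit.
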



We remark that introducing heavy notation for Lemma \ref{lem:abcX} is worth it: the proof takes just three lines, whereas the case $d = 2$ takes about a page of algebra to check otherwise. 
Using Lemma \ref{lem:abcX}, we can readily construct an analytically convenient matrix in $\frak{lie} ( \langle p \rangle )$. 

\begin{theorem}
\label{thm:dxd}
Let $\mathcal{J} := \{j_1,\dots,j_d\} \subseteq [n-1]$, $\omega \in \mathbb{R}$ and
\begin{equation}
\label{eq:dxdA}
A_{(\mathcal{J})}^{(p;\omega)} := \omega \sum_{u,v} \left ( \delta_{j_u j_v} - \frac{1}{1+r_{(\mathcal{J})}1} r_{j_v} \right ) e_{(j_u,j_v)}^{(p)}.
\end{equation}
Then
\begin{equation}
\label{eq:dxd}
\exp tA_{(\mathcal{J})}^{(p;\omega)} = I + \frac{e^{\omega t}-1}{\omega}A_{(\mathcal{J})}^{(p;\omega)}.
\end{equation}
Moreover, $\exp \left ( -tA_{(\mathcal{J})}^{(p;\omega)} \right ) \in \langle p \rangle^+ \cap GL(n, \mathbb{R})$ if $t \ge 0$. In particular, the \emph{Barker matrix}
\begin{equation}
\label{eq:BpJ}
\mathcal{B}_{(\mathcal{J})}^{(p)} := I - \omega^{-1} A_{(\mathcal{J})}^{(p;\omega)}
\end{equation}
is in $\langle p \rangle^+$, and does not depend on $\omega$.
\end{theorem}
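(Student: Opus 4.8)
The plan is to reduce everything to a single scalar computation using the projection $\alpha \mapsto \alpha_{(\mathcal{J})}$ and Lemma \ref{lem:abcX}. First I would observe that $A_{(\mathcal{J})}^{(p;\omega)} = \alpha_{(\mathcal{J})}^{(p)}$ where $\alpha_{(\mathcal{J})} = \omega\bigl(I - \tfrac{1}{1+r_{(\mathcal{J})}1}\,1r_{(\mathcal{J})}\bigr) \in M_{d,d}$ (reading the subscripted indices off \eqref{eq:dxdA}). To establish \eqref{eq:dxd} it suffices to show that $A_{(\mathcal{J})}^{(p;\omega)}$ squares to $\omega$ times itself, i.e.\ $\bigl(A_{(\mathcal{J})}^{(p;\omega)}\bigr)^2 = \omega\, A_{(\mathcal{J})}^{(p;\omega)}$: granting this, all higher powers collapse, $\bigl(A_{(\mathcal{J})}^{(p;\omega)}\bigr)^i = \omega^{i-1} A_{(\mathcal{J})}^{(p;\omega)}$ for $i \ge 1$, and summing the exponential series gives $\exp tA_{(\mathcal{J})}^{(p;\omega)} = I + \sum_{i\ge1}\frac{t^i\omega^{i-1}}{i!}A_{(\mathcal{J})}^{(p;\omega)} = I + \frac{e^{\omega t}-1}{\omega}A_{(\mathcal{J})}^{(p;\omega)}$, which is exactly \eqref{eq:dxd}.

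By Lemma \ref{lem:abcX}, the identity $\bigl(A_{(\mathcal{J})}^{(p;\omega)}\bigr)^2 = \omega A_{(\mathcal{J})}^{(p;\omega)}$ is equivalent to the purely $d\times d$ statement $\alpha_{(\mathcal{J})}(I + 1r_{(\mathcal{J})})\alpha_{(\mathcal{J})} = \omega\,\alpha_{(\mathcal{J})}$. Writing $c := (1+r_{(\mathcal{J})}1)^{-1}$ and $M := I - c\,1r_{(\mathcal{J})}$ so that $\alpha_{(\mathcal{J})} = \omega M$, this becomes $M(I + 1r_{(\mathcal{J})})M = M$. This is a short rank-one computation: expand $(I + 1r_{(\mathcal{J})})M = M + 1r_{(\mathcal{J})} - c\,1(r_{(\mathcal{J})}1)r_{(\mathcal{J})} = M + \bigl(1 - c(r_{(\mathcal{J})}1)\bigr)1r_{(\mathcal{J})} = M + c\,1r_{(\mathcal{J})}$ (using $1 - c(r_{(\mathcal{J})}1) = c$), so the left side is $M\bigl(M + c\,1r_{(\mathcal{J})}\bigr) = M^2 + cM1r_{(\mathcal{J})}$; one more line using $r_{(\mathcal{J})}1 = c^{-1}-1$ gives $M^2 = M - c\,1r_{(\mathcal{J})} + c^2(r_{(\mathcal{J})}1)1r_{(\mathcal{J})} = M - c^2 1 r_{(\mathcal{J})}$ and $M1r_{(\mathcal{J})} = (1 - c(r_{(\mathcal{J})}1))1r_{(\mathcal{J})} = c\,1r_{(\mathcal{J})}$, so $M^2 + cM1r_{(\mathcal{J})} = M - c^2 1r_{(\mathcal{J})} + c^2 1r_{(\mathcal{J})} = M$, as wanted.

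For membership in $\langle p \rangle^+ \cap GL(n,\mathbb{R})$: invertibility for $t\ge 0$ follows because $\exp$ of any matrix is invertible, and the $\langle p\rangle$ part follows since $A_{(\mathcal{J})}^{(p;\omega)} \in \mathfrak{lie}(\langle p\rangle)$ by construction (it is a linear combination of the $e_{(j_u,j_v)}^{(p)}$), so its exponential lies in $\langle p\rangle$. The nonnegativity claim is the one point requiring genuine care. Using \eqref{eq:dxd}, $\exp(-tA_{(\mathcal{J})}^{(p;\omega)}) = I + \frac{1-e^{-\omega t}}{\omega}A_{(\mathcal{J})}^{(p;\omega)}$; one checks (again via the $d\times d$ reduction) that $\frac{1-e^{-\omega t}}{\omega}$ ranges over $[0,\omega^{-1}]$ as $t$ ranges over $[0,\infty)$ — or over the analogous interval when $\omega<0$, taking limits appropriately — and that $\mathcal{B}_{(\mathcal{J})}^{(p)} = I - \omega^{-1}A_{(\mathcal{J})}^{(p;\omega)}$ is the endpoint $t\to\infty$. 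So it suffices to verify the entries of $\mathcal{B}_{(\mathcal{J})}^{(p)}$ are in $[0,1]$; since $\exp(-tA_{(\mathcal{J})}^{(p;\omega)})$ is a convex combination of $I$ and $\mathcal{B}_{(\mathcal{J})}^{(p)}$, this then gives the whole family. Reading off $\mathcal{B}_{(\mathcal{J})}^{(p)} = I - \sum_{u,v}(\delta_{j_uj_v} - c\,r_{j_v})e_{(j_u,j_v)}^{(p)}$ and using \eqref{eq:GENbasis} to expand in the standard basis, the relevant entries are (up to relabeling) of the form $c\,r_{j_v}$, $c$, $c\,r_{j_v}r_{j_u}$, and their complements, all manifestly in $[0,1]$ because $c = (1+\sum_w r_{j_w})^{-1} \le (1+r_{j_u})^{-1}$ for each $u$; the final diagonal-block and last-row/column bookkeeping is routine. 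Finally, $\mathcal{B}_{(\mathcal{J})}^{(p)}$ is visibly independent of $\omega$ since $\omega^{-1}A_{(\mathcal{J})}^{(p;\omega)} = \sum_{u,v}(\delta_{j_uj_v} - c\,r_{j_v})e_{(j_u,j_v)}^{(p)}$ has no $\omega$ in it. The main obstacle is thus not the exponential identity (which is the rank-one collapse above) but the careful nonnegativity accounting for $\mathcal{B}_{(\mathcal{J})}^{(p)}$, including the conventions at the index $n$ and the sign of $\omega$.
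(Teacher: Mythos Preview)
Your argument is correct and follows the same overall route as the paper: use Lemma~\ref{lem:abcX} to reduce to a $d\times d$ identity, show $\bigl(A_{(\mathcal{J})}^{(p;\omega)}\bigr)^2=\omega\,A_{(\mathcal{J})}^{(p;\omega)}$, and sum the series. The only substantive difference is that the paper recognizes via the Sherman--Morrison formula that your $M=I-c\,1r_{(\mathcal{J})}$ is exactly $(I+1r_{(\mathcal{J})})^{-1}$, so $M(I+1r_{(\mathcal{J})})M=I\cdot M=M$ is immediate rather than requiring your rank-one expansion; conversely, your entrywise nonnegativity check for $\mathcal{B}_{(\mathcal{J})}^{(p)}$ and the convex-combination argument for $\exp(-tA_{(\mathcal{J})}^{(p;\omega)})$ are more explicit than the paper's proof, which simply defers to the pattern of Theorem~\ref{thm:GENbasis}.
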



Let $\Delta$ denote the map that takes a matrix to the vector of its diagonal entries, and indicate the boundary of a set using $\partial$.
\begin{lemma}
\label{lem:metropolis}
The \emph{Metropolis matrix}
\begin{equation}
\label{eq:MpJ}
\mathcal{M}_{(\mathcal{J})}^{(p)} := I - \frac{1}{\max \Delta \left ( A_{(\mathcal{J})}^{(p;\omega)} \right )}A_{(\mathcal{J})}^{(p;\omega)}
\end{equation} 
is in $\partial \langle p \rangle^+$ and does not depend on $\omega$.
\end{lemma}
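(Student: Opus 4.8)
The plan is to make $\mathcal{M}_{(\mathcal{J})}^{(p)}$ completely explicit and then verify two elementary facts by inspection. Write $B := A_{(\mathcal{J})}^{(p;1)}$; by \eqref{eq:dxdA} we have $A_{(\mathcal{J})}^{(p;\omega)} = \omega B$, so for $\omega > 0$ the right-hand side of \eqref{eq:MpJ} equals $I - B/\max\Delta(B)$, which plainly does not depend on $\omega$ --- half of the statement. Moreover $B$ is a real linear combination of the $e_{(j_u,j_v)}^{(p)}$, and $e_{(j,k)}^{(p)}1 = 0$ together with $p\,e_{(j,k)}^{(p)} = 0$ follow at once from \eqref{eq:GENbasis} (using $r_j = p_j/p_n$); hence $B1 = 0$ and $pB = 0$, so $\mathcal{M}_{(\mathcal{J})}^{(p)}1 = 1$ and $p\,\mathcal{M}_{(\mathcal{J})}^{(p)} = p$. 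It therefore only remains to show \textbf{(a)} $\mathcal{M}_{(\mathcal{J})}^{(p)} \ge 0$ entrywise, which gives $\mathcal{M}_{(\mathcal{J})}^{(p)} \in \langle p \rangle^+$, and \textbf{(b)} that $\mathcal{M}_{(\mathcal{J})}^{(p)}$ has a vanishing entry, which puts it in $\partial \langle p \rangle^+$.

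Both (a) and (b) come out of the entries of $B$. Abbreviating $s := r_{(\mathcal{J})}1$ and $c := (1+s)^{-1}$ (so $cs = 1 - c$), I would substitute $e_{(j_u,j_v)}^{(p)} = (e_{j_u} - r_{j_u}e_n)(e_{j_v} - e_n)^T$ into \eqref{eq:dxdA}; since the $j_u$ are distinct, $\delta_{j_u j_v} = \delta_{uv}$ and the double sum collapses into the rank-structured form $B = \sum_u (e_{j_u} - r_{j_u}e_n)(e_{j_u} - e_n)^T - c\,(\sum_u e_{j_u} - s e_n)(\sum_u r_{j_u} e_{j_u} - s e_n)^T$. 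Expanding the outer products, dropping every index outside $\mathcal{J} \cup \{n\}$, and using $1 - cs = c$, one finds
\[
B_{ab} = \begin{cases} \delta_{ab} - c\,r_b, & a,b \in \mathcal{J}; \\ -c, & a \in \mathcal{J},\ b = n; \\ -c\,r_b, & a = n,\ b \in \mathcal{J}; \\ c\,s, & a = b = n; \\ 0, & \text{otherwise}. \end{cases}
\]
The only consequences I actually use are \emph{(i)} every off-diagonal entry of $B$ is $\le 0$, and \emph{(ii)} $B_{nn} = cs = s/(1+s) > 0$, so $\max\Delta(B) > 0$ and the normalization in \eqref{eq:MpJ} is legitimate.

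Given (i) and (ii), claim (a) is automatic: for $a \ne b$, $(\mathcal{M}_{(\mathcal{J})}^{(p)})_{ab} = -B_{ab}/\max\Delta(B) \ge 0$ by (i) and (ii); and on the diagonal $(\mathcal{M}_{(\mathcal{J})}^{(p)})_{aa} = 1 - B_{aa}/\max\Delta(B) \ge 0$ simply because $B_{aa} \le \max\Delta(B)$ by definition of the maximum. For (b), pick an index $a^\star$ attaining $\max\Delta(B)$ (by (ii) it lies in $\mathcal{J} \cup \{n\}$); then $(\mathcal{M}_{(\mathcal{J})}^{(p)})_{a^\star a^\star} = 1 - B_{a^\star a^\star}/\max\Delta(B) = 0$. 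The hyperplane $\{P : P_{a^\star a^\star} = 0\}$ supports $\langle p \rangle^+$ --- all entries of matrices in $\langle p \rangle^+$ are nonnegative --- but does not contain it (e.g. $I \in \langle p \rangle^+$ lies off it), so $\mathcal{M}_{(\mathcal{J})}^{(p)} \in \partial \langle p \rangle^+$, which finishes the proof.

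The one genuine computation is the entrywise evaluation of $B$, and the rank-structured expansion above is what keeps it to a few lines rather than a page; the step I expect to be most error-prone is tracking the $n$-th row and column --- in particular checking that the several contributions to $B_{jn}$ (for $j \in \mathcal{J}$), which enter with opposite signs, really cancel down to $-c \le 0$ rather than to something positive. Everything after that is forced by the sign pattern of $B$ together with the definition of $\max\Delta$.
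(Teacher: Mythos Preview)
Your proof is correct and follows essentially the same route as the paper's. The paper's argument reduces to three sign observations about $A \equiv A_{(\mathcal{J})}^{(p;\omega)}$, namely $\Delta(A) \ge 0$, $\max\Delta(A) > 0$, and $A - \Delta(\Delta(A)) \le 0$; these are exactly the facts you extract from your explicit entry computation of $B$ (your (i) and (ii), together with the nonnegativity of the diagonal, which you correctly note is not even needed since $B_{aa} \le \max\Delta(B)$ is tautological). You are simply more explicit than the paper in writing out the entries, in handling the $\omega$-independence (for $\omega>0$), and in spelling out the supporting-hyperplane argument for $\mathcal{M}_{(\mathcal{J})}^{(p)} \in \partial\langle p\rangle^+$, all of which the paper leaves to the reader.
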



\subsection{\label{sec:Example}Example}

    %
    %
    %
    %
    %
    
As an example, consider $p = (1,2,3,4,10)/20$ and $\mathcal{J} = \{1,2,3\}$. Now \eqref{eq:dxdA} is given by
\begin{equation}
A_{(\mathcal{J})}^{(p;\omega)} = 
\frac{\omega}{16}\begin{psmallmatrix} 15 & -2 & -3 & 0 & -10 \\ -1 & 14 & -3 & 0 & -10 \\ -1 & -2 & 13 & 0 & -10 \\ 0 & 0 & 0 & 0 & 0 \\ -1 & -2 & -3 & 0 & 6 \end{psmallmatrix}. \nonumber
\end{equation}
For $\omega = 1$ and $t = -\log 2$, \eqref{eq:dxd} is given by 
\begin{equation}
\exp \left ( \log 2 \cdot A_{(\mathcal{J})}^{(p;1)} \right ) = 
\frac{1}{32}\begin{psmallmatrix} 17 & 2 & 3 & 0 & 10 \\ 1 & 18 & 3 & 0 & 10 \\ 1 & 2 & 19 & 0 & 10 \\ 0 & 0 & 0 & 32 & 0 \\ 1 & 2 & 3 & 0 & 26 \end{psmallmatrix}. \nonumber
\end{equation}
Finally, \eqref{eq:BpJ} and \eqref{eq:MpJ} are respectively given by
\begin{equation}
\mathcal{B}_{(\mathcal{J})}^{(p)} =
\frac{1}{16}\begin{psmallmatrix} 1 & 2 & 3 & 0 & 10 \\ 1 & 2 & 3 & 0 & 10 \\ 1 & 2 & 3 & 0 & 10 \\ 0 & 0 & 0 & 16 & 0 \\ 1 & 2 & 3 & 0 & 10 \end{psmallmatrix}; \quad \mathcal{M}_{(\mathcal{J})}^{(p)} =
\frac{1}{15}\begin{psmallmatrix} 0 & 2 & 3 & 0 & 10 \\ 1 & 1 & 3 & 0 & 10 \\ 1 & 2 & 2 & 0 & 10 \\ 0 & 0 & 0 & 15 & 0 \\ 1 & 2 & 3 & 0 & 9 \end{psmallmatrix}. \nonumber
\end{equation}

\section{\label{sec:HigherOrder}Higher-order samplers}

The idea now is to let $n \rightarrow j \in \mathcal{J}$ correspond to a generic transition as in \S \ref{sec:BarkerMetropolis}. (Again, we do not specify or constrain a proposal that produces $\mathcal{J}$.) This yields novel MCMC algorithms using \eqref{eq:BpJ} and \eqref{eq:MpJ} which we respectively call \emph{higher-order Barker and Metropolis samplers} and abbreviate as HOBS and HOMS. 

The corresponding matrix elements are readily obtained with a bit of arithmetic: we have that
\begin{align}
\label{eq:entriesA}
\frac{1}{\omega} \left ( A_{(\mathcal{J})}^{(p;\omega)} \right )_{j_u j_u} & = 1 - \frac{r_{j_u}}{1+r_{(\mathcal{J})}1}; \nonumber \\
\frac{1}{\omega} \left ( A_{(\mathcal{J})}^{(p;\omega)} \right )_{n j_u} & = - \frac{r_{j_u}}{1+r_{(\mathcal{J})}1}; \nonumber \\
\frac{1}{\omega} \left ( A_{(\mathcal{J})}^{(p;\omega)} \right )_{n n} & = \frac{r_{(\mathcal{J})}1}{1+r_{(\mathcal{J})}1},
\end{align}
which yields the HOBS:
\begin{align}
\label{eq:HOBS}
\left ( \mathcal{B}_{(\mathcal{J})}^{(p)} \right )_{n j_u} & = \frac{r_{j_u}}{1+r_{(\mathcal{J})}1}; \quad
\left ( \mathcal{B}_{(\mathcal{J})}^{(p)} \right )_{n n} & = \frac{1}{1+r_{(\mathcal{J})}1}.
\end{align}

Meanwhile, 
\begin{equation}
\frac{1}{\omega} \max \Delta \left ( A_{(\mathcal{J})}^{(p;\omega)} \right ) = \frac{1+r_{(\mathcal{J})}1-\min \{ 1, \min r_{(\mathcal{J})} \}}{1+r_{(\mathcal{J})}1} \nonumber
\end{equation}
which yields the HOMS:
\begin{align}
\label{eq:HOMS}
\left ( \mathcal{M}_{(\mathcal{J})}^{(p)} \right )_{n j_u} & = \frac{r_{j_u}}{1+r_{(\mathcal{J})}1-\min \{ 1, \min r_{(\mathcal{J})} \}}; \nonumber \\
\left ( \mathcal{M}_{(\mathcal{J})}^{(p)} \right )_{n n} & = 1 - \frac{r_{(\mathcal{J})}1}{1+r_{(\mathcal{J})}1-\min \{ 1, \min r_{(\mathcal{J})} \}}.
\end{align}

\begin{algorithm}[tb]
   \caption{HOMS}
   \label{alg:homs}
\begin{algorithmic}
   \STATE {\bfseries Input:} Runtime $T$ and oracle for $r$
   \STATE Initialize $t=0$ and $X_0$
   \REPEAT
   \STATE Relabel states so that $X_t = n$
   \STATE Propose $\mathcal{J} = \{j_1,\dots,j_d\} \subseteq [n-1]$
   \STATE Accept $X_{t+1} = j_u$ with probability \eqref{eq:HOMS}
   \STATE Undo relabeling; set $t = t+1$
   \UNTIL{$t = T$}
   \STATE {\bfseries Output:} $\{X_t\}_{t=0}^T \sim p^{\times (T+1)}$ (approximately)
   \end{algorithmic}
\end{algorithm}

The HOBS turns out to be equivalent to the ensemble MCMC algorithm of \cite{neal2010ensemble} as described in \cite{martino2018MT} and \cite{martino2018IRSM}. While in \S \ref{sec:Behavior} the proposal mechanism we use for the HOBS essentially (apart from non-replacement, which technically induces jointness) amounts to the \emph{independent} ensemble MCMC sampler, in general this is not the case. A more sophisticated proposal mechanism with joint structure will be more powerful. That said, we reiterate that our approach is agnostic with respect to the details of proposals.

On the other hand, the HOMS is different than a \emph{multiple-try Metropolis sampler} (MTMS), including the independent MTMS described in \cite{martino2018MT}. In the HOMS, we sample from $\mathcal{J} \cup \{n\}$ to perform a state transition in a single step according to \eqref{eq:HOMS}, whereas a MTMS first samples from $\mathcal{J}$ and then accepts or rejects the result.
The HOMS (and for that matter, also the HOBS) turns out to be a slightly special case of a construction in \S 2.3 of \cite{delmas2009does}. This work uses a ``proposition kernel''  that is defined by assigning a probability distribution on the power set of the state space to each element of the state space. Roughly speaking, the HOMS and HOBS result if this distribution is independent of the individual element (i.e., varying only with the subset).

\section{\label{sec:Behavior}Behavior}

As $d = |\mathcal{J}|$ increases and/or $p$ becomes less uniform (e.g., in a low-temperature limit), the difference between the HOBS and HOMS decreases, since in either limit we have $\min \{ 1, \min r_{(\mathcal{J})} \} \ll 1+r_{(\mathcal{J})}1$. Although these limits are where one might hope to gain the most utility from improved MCMC algorithms, the HOMS can still provide an advantage in, e.g. the high-temperature part of a parallel tempering scheme (see \cite{earl2005tempering}) or for $d > 1$ but small, with elements chosen in complementary ways (uniformly at random, near current/previous states, etc.).

We exhibit the the behavior of the HOBS and HOMS on a Sherrington-Kirkpatrick (SK) spin glass in Figure \ref{fig:SK_9spins_b025}. 
As \cite{bolthausen2007spin} and \cite{panchenko2012sk} remark, the SK spin glass is the distribution \begin{equation}
\label{eq:SK}
p(s) := \textstyle{Z^{-1} \exp \left ( -\frac{\beta}{\sqrt N} \sum_{jk} J_{jk} s_j s_k \right )}
\end{equation}
over spins $s \in \{\pm 1\}^N$, where $J$ is a symmetric $N \times N$ matrix with IID standard Gaussian entries and $\beta$ is the inverse temperature. 

The disordered landscape of the SK model suits a straightforward evaluation of higher-order samplers: more detailed benchmarks seem to require specific assumptions (e.g., exploiting the particular form of a spin Hamiltonian for Swendsen-Wang updates) and/or parameters (e.g., the choice of additional temperatures for parallel tempering or of a vorticity matrix for non-reversible Metropolis-Hastings). In particular, we do not consider sophisticated or diverse ways to generate elements of proposal sets $\mathcal{J}$: instead, we merely select the elements of $\mathcal{J}$ uniformly at random without replacement. We also use the same pseudorandom number generator initial state for the HOBS and HOMS simulations in order to highlight their relative behavior, and pick $\beta$ low enough ($1/4$ and $1$) so that the behavior of a single run is sufficiently representative to make qualitative judgments.

\begin{figure}[htbp]
\includegraphics[trim = 10mm 0mm 10mm 0mm, clip, width=\columnwidth,keepaspectratio]{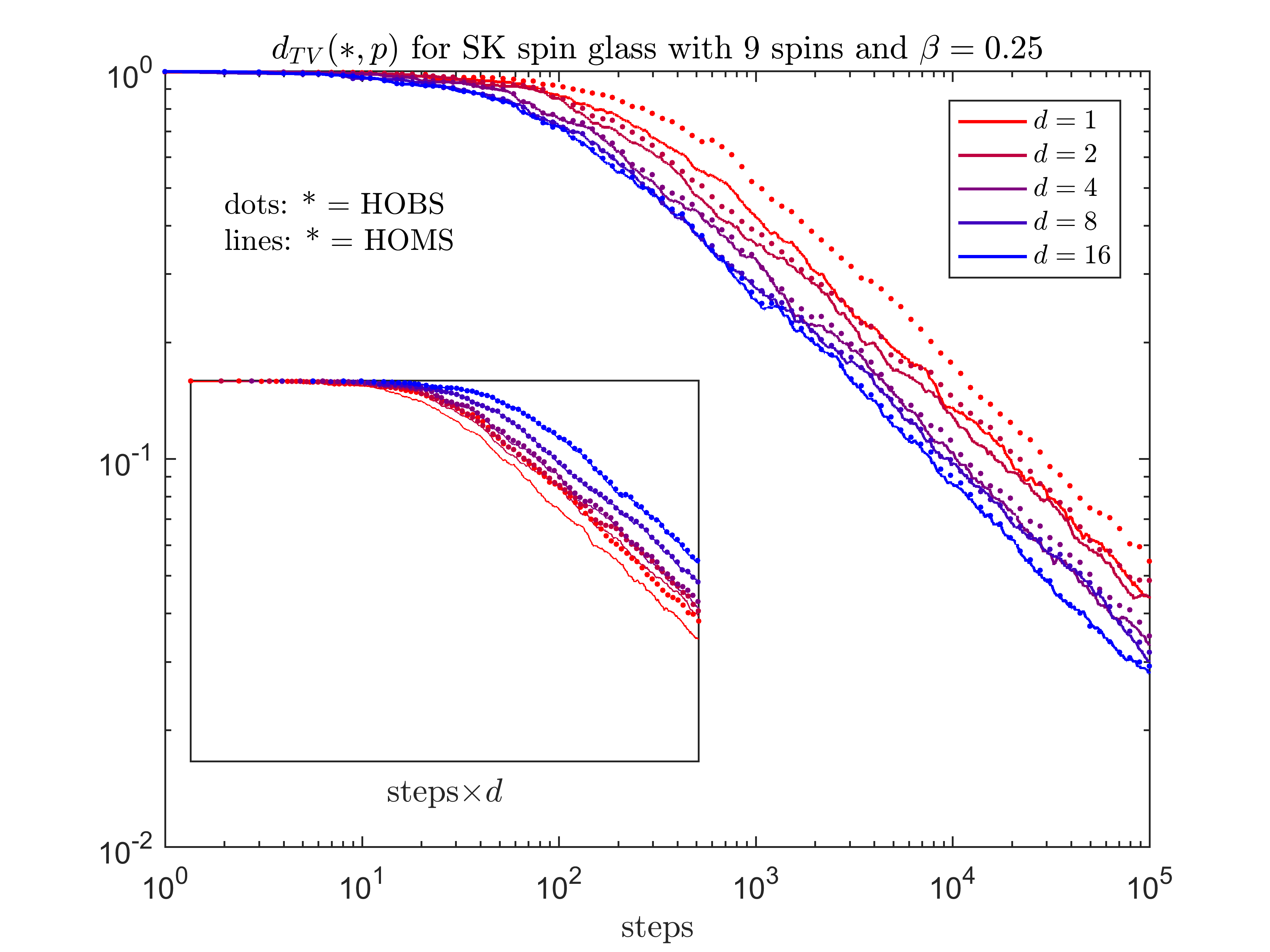}
\caption{ \label{fig:SK_9spins_b025} Total variation distance between the HOBS/HOMS with proposal sets $\mathcal{J}$ (elements distributed uniformly without replacement) of varying sizes $d$ and \eqref{eq:SK} with 9 spins and $\beta = 1/4$. Inset: the same data and window, with horizontal axis normalized by $d$. Not shown: for $\beta = 1$, the HOMS visibly outperforms the HOBS, but only for $d = 1$.
} 
\end{figure} %

%

The inset figure shows that although higher-order samplers converge more quickly, this requires more evaluations of probability ratios. Parallelism is therefore necessary to make higher-order samplers worthwhile.

As noted above, the HOMS gives results very close to the HOBS except for small values of $d$ or a more uniform target distribution $p$. Increasing the number of spins and/or considering an Edwards-Anderson spin glass also gives qualitatively similar results.

\section{\label{sec:LP}A linear program}

We can push these ideas further by using a linear program to (implicitly) construct transition matrices with the desired invariant measure and that are optimal in some sense, though the regime of utility then narrows to situations where computing likelihoods is hard enough and parallel resources are sufficient to justify the added computational costs. For example, the particular objective function
$-1_\mathcal{J}^T \tau_{(\mathcal{J})}^{(p)} r_\mathcal{J}^T$ considered immediately after \eqref{eq:genericObjective} yields an optimal sparse approximation of the ``ultimate'' transition matrix $1p$. (To the best of our knowledge, this construction has not been considered elsewhere.)

Toward this end, define $1_\mathcal{J} \in \mathbb{R}^n$ by 
\begin{equation}
(1_\mathcal{J})_j := \begin{cases} 1 & \text{if } j \in \mathcal{J} \cup \{n\} \\ 0 & \text{otherwise}, \end{cases} \nonumber
\end{equation}
$1_\mathcal{J}^- := ((1_\mathcal{J})_1,\dots,(1_\mathcal{J})_{n-1})^T$, $r_\mathcal{J} := r \odot 1_\mathcal{J}^T$, and $r_\mathcal{J}^- := r^- \odot (1_\mathcal{J}^-)^T$, where $\odot$ is the entrywise or Hadamard product (note that $r_\mathcal{J} \in \mathbb{R}^n$, while $r_{(\mathcal{J})} \in \mathbb{R}^{|\mathcal{J}|}$ has been defined previously). 

Writing $\Delta$ for the matrix diagonal map, using the notation of Lemma \ref{lem:abcX}, and noting that
\begin{equation}
\label{eq:tau}
\tau_{(\mathcal{J})}^{(p)} = \begin{pmatrix} I_{n-1} \\ -r_\mathcal{J}^- \end{pmatrix} \tau \begin{pmatrix} I_{n-1} & -1_\mathcal{J}^- \end{pmatrix},
\end{equation}
we have that $I - \tau_{(\mathcal{J})}^{(p)} \in \langle p \rangle^+$ iff
\begin{subequations}
\label{eq:constraints}
\begin{align}
0 & \le I_{n-1} - \Delta(1_\mathcal{J}^-) \tau \Delta(1_\mathcal{J}^-) \le 1; \label{eq:bodyConstraint} \\
0 & \le \tau 1_\mathcal{J}^- \le 1; \label{eq:lastColConstraint} \\
0 & \le r_\mathcal{J}^- \tau \le 1; \label{eq:lastRowConstraint} \\
0 & \le r_\mathcal{J}^- \tau 1_\mathcal{J}^- \le 1. \label{eq:lastElementConstraint}
\end{align}
\end{subequations}
Constraints \eqref{eq:lastColConstraint}-\eqref{eq:lastElementConstraint} respectively force the first $n-1$ entries of the last column, the first $n-1$ entries of the last row, and the bottom right matrix entry of $\tau_{(\mathcal{J})}^{(p)}$ to be in the unit interval; \eqref{eq:bodyConstraint} forces the relevant entries of the ``coefficient matrix'' $\tau$ (as an upper left submatrix of $\tau_{(\mathcal{J})}^{(p)}$) to be in the unit interval.

Furthermore, it is convenient to set to zero the irrelevant/unspecified rows and columns of $\tau$ that do not contribute to $\tau_{(\mathcal{J})}^{(p)}$ via the constraints
\begin{equation}
\label{eq:sparsityConstraint}
\Delta(1-1_\mathcal{J}^-) \tau = \tau \Delta(1-1_\mathcal{J}^-) = 0.
\end{equation}
If we impose \eqref{eq:sparsityConstraint}, then \eqref{eq:bodyConstraint} can be replaced with
\begin{equation}
\label{eq:bodyConstraint2}
0 \le I_{n-1} - \tau \le 1.
\end{equation}

The ``diagonal'' case corresponding to Lemma \ref{lem:pos} shows that the constraints \eqref{eq:constraints} and \eqref{eq:sparsityConstraint} jointly have nontrivial solutions. It is therefore natural to consider suitable objectives and the corresponding linear programs for optimizing the MCMC transition matrix $I - \tau_{(\mathcal{J})}^{(p)}$. Toward this end, we introduce the \emph{vectorization} map $\text{vec}$ that sends a matrix to the vector obtained by stacking the matrix columns in order, and which obeys the useful identity $\text{vec}(XYZ^T) = (Z \otimes X) \text{vec}(Y)$, where $\otimes$ denotes the tensor product.

A reasonably generic objective to maximize is
\begin{equation}
\label{eq:genericObjective}
 x^T \tau_{(\mathcal{J})}^{(p)} y = (y^T \otimes x^T) \text{vec} \left ( \tau_{(\mathcal{J})}^{(p)} \right )
\end{equation}
for suitable vectors $x$ and $y$. In practice, we shall take $x = 1_\mathcal{J}$ and $y = -r_\mathcal{J}^T$, so that our objective maximizes the Frobenius inner product of $I - \tau_{(\mathcal{J})}^{(p)}$ and $1_\mathcal{J} r_\mathcal{J}$ as a consequence of the equality
\begin{equation}
\text{tr} \left ( \left ( I - \tau_{(\mathcal{J})}^{(p)} \right )^T 1_\mathcal{J} r_\mathcal{J} \right ) = r_\mathcal{J} 1_\mathcal{J} - 1_\mathcal{J}^T \tau_{(\mathcal{J})}^{(p)} r_\mathcal{J}^T. \nonumber
\end{equation}
Alternatives such as $x = e_n, y = e_n$ (which discourages self-transitions) can result in convergence that slows catastrophically as $d = |\mathcal{J}|$ increases, because high-probability states are less likely to remain occupied. More surprisingly, the same sort of slowing down occurs for $x = e_n, y = -r_\mathcal{J}^T$ and even for variations upon the $n$th component of $y$: we suspect that the cause is the same, though mediated indirectly through an objective that ``overfits'' the proposed transition probabilities to the detriment of remaining in place (or in some cases ``underfits'' by yielding the identity matrix). In general, it appears nontrivial to select better choices for $x$ and $y$ than our defaults.

By \eqref{eq:tau} we get
\begin{equation}
\text{vec} \left ( \tau_{(\mathcal{J})}^{(p)} \right ) = \left [ \begin{pmatrix} I_{n-1} \\ -(1_\mathcal{J}^-)^T \end{pmatrix} \otimes \begin{pmatrix} I_{n-1} \\ -r_\mathcal{J}^- \end{pmatrix} \right ] \text{vec}(\tau),
\end{equation}
and in turn $(y^T \otimes x^T) \text{vec} \left ( \tau_{(\mathcal{J})}^{(p)} \right )$ equals
\begin{equation}
\label{eq:tensorObjective}
\left [ y^T \begin{pmatrix} I_{n-1} \\ -(1_\mathcal{J}^-)^T \end{pmatrix} \otimes x^T \begin{pmatrix} I_{n-1} \\ -r_\mathcal{J}^- \end{pmatrix} \right ] \text{vec}(\tau).
\end{equation}

At this point both the constraints and the objective of the linear program are explicitly specified in terms of the ``coefficient'' matrix $\tau$. We can rephrase the constraints into a more computationally convenient form, respectively rephrasing \eqref{eq:lastColConstraint}-\eqref{eq:lastElementConstraint}, \eqref{eq:sparsityConstraint}, and \eqref{eq:bodyConstraint2} as 
\begin{equation}
0 \le \begin{pmatrix} \left ( 1_\mathcal{J}^- \right )^T \otimes I_{n-1} \\ I_{n-1} \otimes r_\mathcal{J}^- \\ \left ( 1_\mathcal{J}^- \right )^T \otimes r_\mathcal{J}^- \end{pmatrix} \text{vec}(\tau) \le 1,
\end{equation}
\begin{equation}
\begin{pmatrix} I_{n-1} \otimes \Delta(1-1_\mathcal{J}^-) \\ \Delta(1-1_\mathcal{J}^-) \otimes I_{n-1} \end{pmatrix} \text{vec}(\tau) = 0,
\end{equation}
\begin{equation}
\text{vec}(I_{n-1})-1 \le \text{vec}(\tau) \le \text{vec}(I_{n-1}).
\end{equation}

Therefore, writing
\begin{align}
U_{(\mathcal{J})}^{(p)} := & \ \begin{pmatrix} I_{2n-1} \\ -I_{2n-1} \end{pmatrix} \begin{pmatrix} \left ( 1_\mathcal{J}^- \right )^T \otimes I_{n-1} \\ I_{n-1} \otimes r_\mathcal{J}^- \\ \left ( 1_\mathcal{J}^- \right )^T \otimes r_\mathcal{J}^- \end{pmatrix}; \nonumber \\
v := & \ \begin{pmatrix} 1_{2n-1} \\ 0_{2n-1} \end{pmatrix}; \nonumber \\
w_{(\mathcal{J})}^{(p)} := & \ -y^T \begin{pmatrix} I_{n-1} \\ -(1_\mathcal{J}^-)^T \end{pmatrix} \otimes x^T \begin{pmatrix} I_{n-1} \\ -r_\mathcal{J}^- \end{pmatrix}, \nonumber
\end{align}
and 
\begin{equation}
U_{(\mathcal{J})}^{(0)} := \begin{pmatrix} I_{n-1} \otimes \Delta(1-1_\mathcal{J}^-) \\ \Delta(1-1_\mathcal{J}^-) \otimes I_{n-1} \end{pmatrix},
\end{equation}
we can at last write the desired linear program (noting the inclusion of a minus sign in $w_{(\mathcal{J})}^{(p)}$ and a minimization versus a maximization as a result) in the MATLAB-ready form
\begin{subequations}
\label{eq:LP}
\begin{align}
\min_{\tau} w_{(\mathcal{J})}^{(p)} \text{vec}(\tau) \quad \text{s.t.}
	& \nonumber \\
U_{(\mathcal{J})}^{(p)} \text{vec}(\tau) \quad \le 
	& \quad v; \label{eq:LPineq} \\
U_{(\mathcal{J})}^{(0)} \text{vec}(\tau) \quad = 
	& \quad 0; \label{eq:LPeq} \\
\text{vec}(\tau) \quad \ge 
	& \quad \text{vec}(I_{n-1})-1; \label{eq:LPlb} \\
\text{vec}(\tau) \quad \le 
	& \quad \text{vec}(I_{n-1}). \label{eq:LPub}
\end{align}
\end{subequations}

As a result of the preceding discussion, we have
\begin{theorem}
\label{thm:LP}
For any $x, y \in \mathbb{R}^n$, the linear program \eqref{eq:LP} has a solution in $\langle p \rangle^+$. \qed
\end{theorem}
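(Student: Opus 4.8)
The plan is to unpack the theorem into two claims: that the linear program \eqref{eq:LP} is solvable, and that the transition matrix $I - \hat\tau_{(\mathcal{J})}^{(p)}$ attached to an optimizer $\hat\tau$ lies in $\langle p \rangle^+$. The first I would obtain from nonemptiness together with compactness of the feasible set; the second is little more than a restatement of the equivalences already assembled in the run-up to the theorem, so the real content of a proof is just the observation that a minimizer exists.

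First I would check feasibility. The ``diagonal'' solutions furnished by Lemma \ref{lem:pos} (mentioned after \eqref{eq:sparsityConstraint}) already show the constraint system is satisfiable; alternatively $\tau = 0$ works outright, since then the left-hand sides of \eqref{eq:LPineq} and \eqref{eq:LPeq} vanish while $v \ge 0$, and \eqref{eq:LPlb}--\eqref{eq:LPub} collapse to $\text{vec}(I_{n-1}) - 1 \le 0 \le \text{vec}(I_{n-1})$, valid because $I_{n-1}$ has entries in $\{0,1\}$. The feasible set is closed, being a finite intersection of closed half-spaces with an affine subspace, and bounded, since \eqref{eq:LPlb} and \eqref{eq:LPub} trap $\text{vec}(\tau)$ in a box; hence it is compact. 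The objective $w_{(\mathcal{J})}^{(p)} \text{vec}(\tau)$ is continuous (linear), so it attains its minimum at some feasible $\hat\tau$, and none of this depends on the choice of $x, y \in \mathbb{R}^n$, which enter only through $w_{(\mathcal{J})}^{(p)}$.

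It remains to see that $\hat\tau$, as a point of the feasible set, yields $I - \hat\tau_{(\mathcal{J})}^{(p)} \in \langle p \rangle^+$. By construction \eqref{eq:LPineq}--\eqref{eq:LPub} are the vectorized forms (obtained via $\text{vec}(XYZ^T) = (Z \otimes X)\text{vec}(Y)$) of the sparsity constraint \eqref{eq:sparsityConstraint} together with \eqref{eq:lastColConstraint}--\eqref{eq:lastElementConstraint} and \eqref{eq:bodyConstraint2}, i.e.\ the shape \eqref{eq:bodyConstraint} takes once \eqref{eq:sparsityConstraint} is imposed; and these, through the block identity \eqref{eq:tau}, are precisely the conditions \eqref{eq:constraints} shown to characterize membership in $\langle p \rangle^+$. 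Concretely: imposing \eqref{eq:sparsityConstraint} makes $\hat\tau$ supported on $\mathcal{J} \times \mathcal{J}$, so \eqref{eq:tau} reduces $\hat\tau_{(\mathcal{J})}^{(p)}$ to $\sum_{u,v} \hat\tau_{j_u j_v} e_{(j_u,j_v)}^{(p)} \in \mathfrak{lie}(\langle p \rangle)$; hence $(I - \hat\tau_{(\mathcal{J})}^{(p)})1 = 1$ and $p(I - \hat\tau_{(\mathcal{J})}^{(p)}) = p$ follow at once from $e_{(j,k)}^{(p)}1 = (e_j - r_j e_n)(e_k^T - e_n^T)1 = 0$ and $p\,e_{(j,k)}^{(p)} = (p_j - r_j p_n)(e_k^T - e_n^T) = 0$; while \eqref{eq:lastColConstraint}--\eqref{eq:lastElementConstraint} and \eqref{eq:bodyConstraint2} force the blocks $\hat\tau 1_\mathcal{J}^-$, $r_\mathcal{J}^- \hat\tau$, $1 - r_\mathcal{J}^- \hat\tau 1_\mathcal{J}^-$, and $I_{n-1} - \hat\tau$ of $I - \hat\tau_{(\mathcal{J})}^{(p)}$ into $[0,1]$, giving entrywise nonnegativity. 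Thus $I - \hat\tau_{(\mathcal{J})}^{(p)} \in \langle p \rangle^+$.

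I do not expect a genuine obstacle here: all the labor sits in the preceding discussion's translation of ``$P \in \langle p \rangle^+$'' into linear (in)equalities, which the statement simply inherits. The one point deserving a moment of care --- since it is not automatic for an arbitrary linear program --- is the existence of a minimizer, and it drops out of the box \eqref{eq:LPlb}--\eqref{eq:LPub}; relatedly, one should note that the ``$\le 1$'' halves of \eqref{eq:LPineq} are not over-restrictive, as a nonnegative matrix with unit row sums automatically has entries in $[0,1]$.
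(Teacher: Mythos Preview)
Your proposal is correct and follows essentially the same route as the paper, which offers no separate proof beyond ``as a result of the preceding discussion'' and a \qed: feasibility is noted via the diagonal case of Lemma~\ref{lem:pos}, and the equivalence between the constraints \eqref{eq:LP} and membership in $\langle p \rangle^+$ is exactly the content of the discussion leading up to the theorem. Your only addition is the explicit compactness argument (via the box \eqref{eq:LPlb}--\eqref{eq:LPub}) guaranteeing that a minimizer actually exists regardless of $x,y$; the paper leaves this implicit, so your version is if anything slightly more complete.
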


\subsection{\label{sec:LPexample}Example}


As in \S \ref{sec:Example}, consider $p = (1,2,3,4,10)/20$ and $\mathcal{J} = \{1,2,3\}$. The solution of the linear program with $x = 1_\mathcal{J}$ and $y = -r_\mathcal{J}^T$ yields the following element of $\langle p \rangle^+$:
\begin{equation}
\begin{psmallmatrix} 0 & 0 & 0 & 0 & 1 \\ 0 & 0 & 0 & 0 & 1 \\ 0 & 0 & 0 & 0 & 1 \\ 0 & 0 & 0 & 1 & 0 \\ 0.1 & 0.2 & 0.3 & 0 & 0.4 \end{psmallmatrix}. \nonumber
\end{equation}
For comparison, we recall that the last row of $\mathcal{M}_{(\mathcal{J})}^{(p)}$ equals 
$(0.0 \bar 6, 0.1 \bar 3, 0.2, 0, 0.6)$.

\subsection{\label{sec:HOPS}The higher-order programming sampler}

Call the sampler obtained from \eqref{eq:genericObjective} and \eqref{eq:LP} with $x = -1_\mathcal{J}$ and $y = r_\mathcal{J}^T$ the \emph{higher-order programming sampler} (HOPS). In figures \ref{fig:SK_9spins_b025_LPcomparison} and \ref{fig:SK_9spins_b100_LPcomparison}, we compare the HOMS and HOPS (cf. Figure \ref{fig:SK_9spins_b025}). It is clear from the figures that the HOPS improves upon the HOMS, which in turn improves upon the HOBS.

\begin{algorithm}[tb]
   \caption{HOPS}
   \label{alg:hops}
\begin{algorithmic}
   \STATE {\bfseries Input:} Runtime $T$ and oracle for $r$
   \STATE Initialize $t=0$ and $X_0$
   \REPEAT
   \STATE Relabel states so that $X_t = n$
   \STATE Propose $\mathcal{J} = \{j_1,\dots,j_d\} \subseteq [n-1]$
   \STATE Compute $\tau$ solving \eqref{eq:LP} with $x = 1_\mathcal{J}$ and $y = -r_\mathcal{J}^T$
   \STATE Set $P = I-\tau_{(\mathcal{J})}^{(p)}$ using \eqref{eq:tau}
   \STATE Accept $X_{t+1} = j_u$ with probability $P_{nj_u}$
   \STATE Undo relabeling; set $t = t+1$
   \UNTIL{$t = T$}
   \STATE {\bfseries Output:} $\{X_t\}_{t=0}^T \sim p^{\times (T+1)}$ (approximately)
   \end{algorithmic}
\end{algorithm}

\section{\label{sec:Remarks}Remarks}

Aside from providing a framework that unifies several different MCMC algorithms, our perspective has uncovered the apparently new HOPS algorithm of \S \ref{sec:LP}, which may enhance existing MCMC techniques specifically tailored for parallel computation, as in \cite{conrad2018expensive}. In particular, the Bayesian approach to inverse problems detailed in 
\cite{dashti2016inverse} offers fertile ground for useful applications.

As we have already indicated, the present paper is agnostic with respect to proposals, and focuses on acceptance mechanisms. However, the proposal arguably plays a more important role in practice than the acceptance mechanism, particularly for differentiable distributions. In any practical application, a stateful and/or problem-specific proposal mechanism with joint structure would likely confer significant additional power to our approach, though we leave investigations along these lines open for now (one possibility is suggested by particle MTMS algorithms as in \cite{martino2014} and exploiting tensor product structure in transition matrices and $\langle p \rangle$). It is also tempting to try to incorporate some limited proposal mechanism into the objective of \eqref{eq:LP}, but it is not clear how to usefully do this in general. We used the SK spin glass to illustrate our ideas precisely because its highly disordered structure (and discrete state space) are suited for separating concerns about proposals and acceptance.

\begin{figure}[htbp]
\includegraphics[trim = 10mm 0mm 10mm 0mm, clip, width=\columnwidth,keepaspectratio]{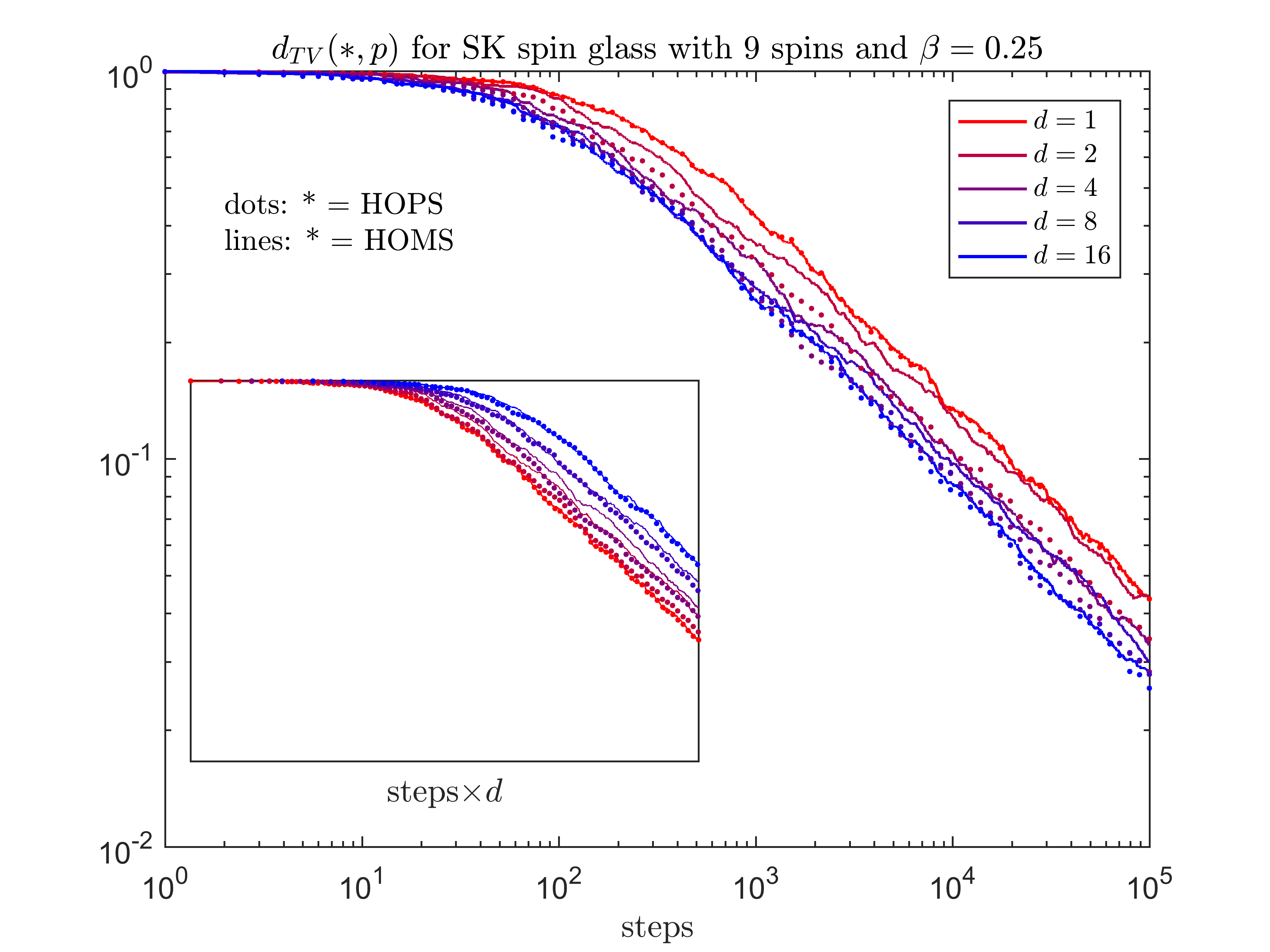}
\caption{ \label{fig:SK_9spins_b025_LPcomparison} Total variation distance between the HOPS/HOMS with proposal sets $\mathcal{J}$ (elements sampled uniformly without replacement) of varying sizes $d$ and \eqref{eq:SK} with 9 spins and $\beta = 1/4$. Inset: same data and window, with horizontal axis normalized by $d$.
} 
\end{figure} %

\begin{figure}[htbp]
\includegraphics[trim = 10mm 0mm 10mm 0mm, clip, width=\columnwidth,keepaspectratio]{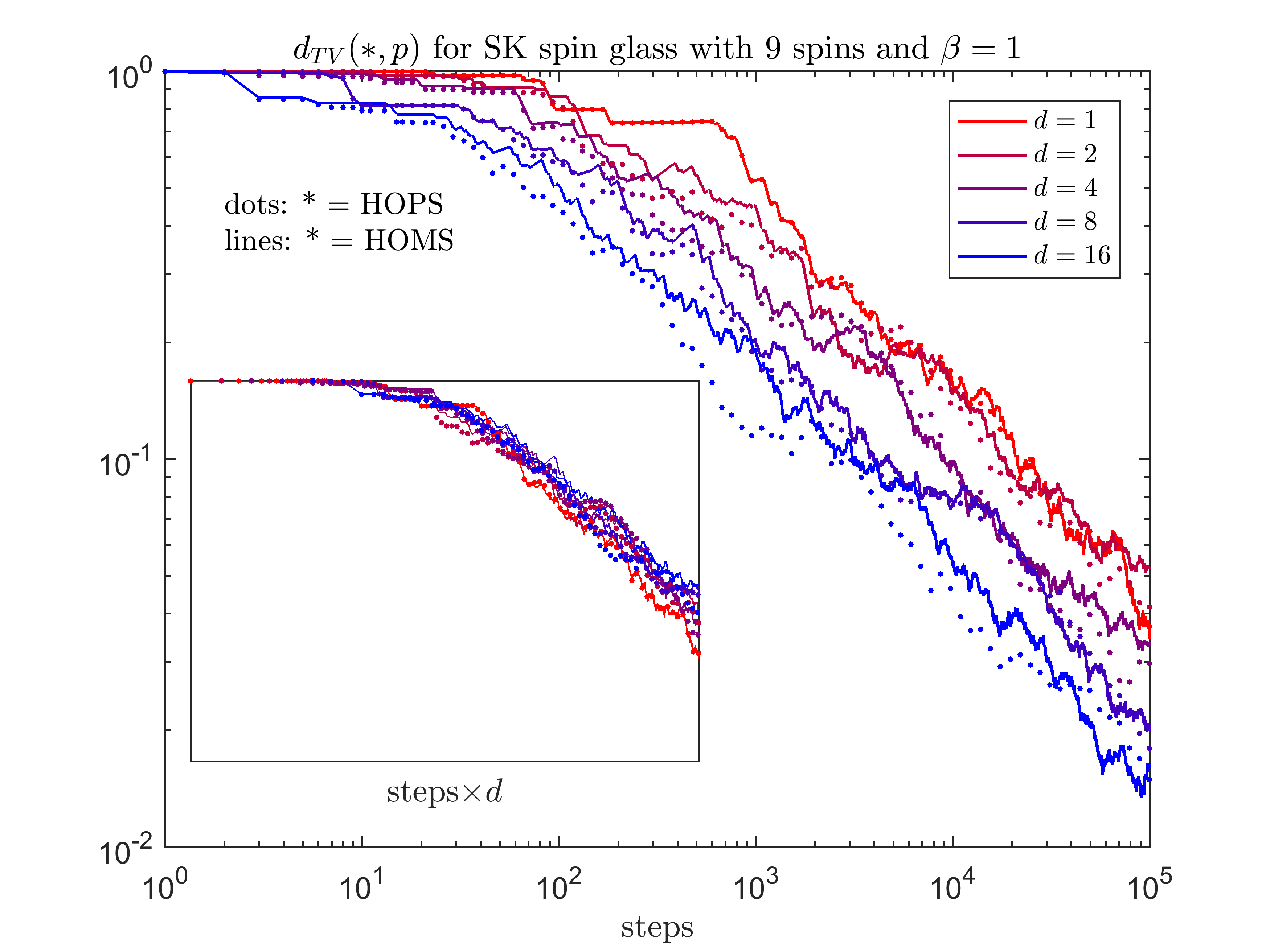}
\caption{ \label{fig:SK_9spins_b100_LPcomparison} As in Figure \ref{fig:SK_9spins_b025_LPcomparison} with $\beta = 1$.
} 
\end{figure} %

It would obviously be interesting to extend the considerations of this paper to continuous variables. However, this seems to require a much more technical treatment, as infinite-dimensional Lie theory, distributions, etc. would inevitably arise at least in principle. We leave this for future work. In a complementary vein, it would be interesting to see if the full construction of \cite{delmas2009does} can be recovered from considerations of symmetry/Lie theory alone.

While Barker and Metropolis samplers are reversible, it is not clear if the HOPS is, though \cite{bierkens2016nonreversible} points out ways to transform reversible kernels into irreversible ones and \emph{vice versa}.

We note that is possible to produce transiton matrices (as it turns out, even in closed form) in which the $n$th row is nonnegative but other rows have negative entries. It is not immediately clear if using such a matrix inevitably poisons a MCMC algorithm. Though our experiments in this direction were not encouraging, we have not found a compelling argument that rules out the use of such matrices. 

It is tempting to try to sample from the vertices of the polytope $\langle p \rangle^+$. 
However, (even approximately) uniformly sampling vertices of a polytope is $\mathbf{NP}$-hard by Theorem 1 of \cite{khachiyan2001transversal}; see also \cite{khachiyan2008vertices}.

\subsubsection*{Acknowledgements}

We thank BAE Systems FAST Labs for its support; Carlo Beenakker, Jun Liu, Lorenzo Najt, Allyson O'Brien, and Daniel Zwillinger for helpful comments; and reviewers for their careful efforts, especially for bringing our attention to \cite{delmas2009does}.

\appendix

\section{\label{sec:Proofs}Proofs}

\begin{proof}[Proof of Lemma \ref{lem:STOalgebra}.] $e_{(j,k)} e_{(\ell, m)} = (\delta_{k\ell}-\delta_{n\ell}) e_{(j,m)}$. Considering $j \leftrightarrow \ell, k \leftrightarrow m$, we are done.
\end{proof}

\begin{proof}[Proof of Lemma \ref{lem:power}.]
Using the rightmost expression in \eqref{eq:GENbasis} and using $j,k,\ell,m \ne n$ to simplify the product of the innermost two factors, we have that  
\begin{equation}
\label{eq:GENproduct}
e_{(j,k)}^{(p)} e_{(\ell,m)}^{(p)} = \left ( \delta_{k \ell} + r_\ell \right ) e_{(j,m)}^{(p)}. \nonumber
\end{equation}
Taking $j = \ell$ and $k = m$ establishes the result for $i \le 2$. The general case follows by induction on $i$. 
\end{proof}

\begin{proof}[Proof of Theorem \ref{thm:GENbasis}.]
Note that
\begin{equation}
p e_{(j,k)}^{(p)} = \left ( p_j - r_j p_n \right ) \left ( e_k^T-e_n^T \right ) \equiv 0. \nonumber
\end{equation} 
Furthermore, linear independence and the commutation relations are obvious, so it suffices to show that $\exp t e_{(j,k)}^{(p)} \in \langle p \rangle$ for all $t \in \mathbb{R}$. By Lemma \ref{lem:power}, 
\begin{align}
\label{eq:GENexp}
\exp t e_{(j,k)}^{(p)} & = & I + e_{(j,k)}^{(p)} \sum_{i = 1}^\infty \frac{t^i \left ( \delta_{jk} + r_j \right )^{i-1}}{i!} \nonumber \\
& = & I + \frac{e^{t(\delta_{jk} + r_j)} - 1}{\delta_{jk} + r_j} e_{(j,k)}^{(p)}. 
\end{align}
\end{proof}

\begin{proof}[Proof of Lemma \ref{lem:pos}.]
By hypothesis and \eqref{eq:GENbasis}, $-\sum_j t_j e_{(j,j)}^{(p)}$ has nonpositive diagonal entries and nonnegative off-diagonal entries (i.e., it is a generator matrix for a continuous-time Markov process); the result follows.
\end{proof}

\begin{proof}[Proof of Lemma \ref{lem:abcX}.]
\begin{align}
\alpha_{(\mathcal{J})}^{(p)} \beta_{(\mathcal{J})}^{(p)} & = \sum_{u,v,w,x} \alpha_{j_u j_v} \beta_{j_w j_x} e_{(j_u,j_v)}^{(p)} e_{(j_w,j_x)}^{(p)}  \nonumber \\
& = \sum_{u,v,w,x} \alpha_{j_u j_v} \left ( \delta_{j_v j_w} + r_{j_w} \right ) \beta_{j_w j_x} e_{(j_u,j_x)}^{(p)} \nonumber \\
& = \sum_{u,x} \left ( \alpha_{(\mathcal{J})} (I+1r_{(\mathcal{J})}) \beta_{(\mathcal{J})} \right )_{ux} e_{(j_u,j_x)}^{(p)}. \nonumber 
\end{align}
where the second equality follows from \eqref{eq:GENproduct} and the third from bookkeeping.
\end{proof}

\begin{proof}[Proof of Theorem \ref{thm:dxd}.]
The Sherman-Morrison formula (see \cite{horn2013matrix}) gives that
\begin{equation}
\omega (I+1r_{(\mathcal{J})})^{-1} = \omega \left ( I - \frac{1}{1+r_{(\mathcal{J})}1}1r_{(\mathcal{J})} \right ) \nonumber
\end{equation}
and the elements of this matrix are precisely the coefficients in \eqref{eq:dxdA}. Using the notation of Lemma \ref{lem:abcX}, we can therefore rewrite \eqref{eq:dxdA} as 
\begin{equation}
A_{(\mathcal{J})}^{(p;\omega)} = \left ( \omega (I+1r_{(\mathcal{J})})^{-1} \right )_{(\mathcal{J})}^{(p)}, \nonumber
\end{equation}
whereupon invoking the lemma itself yields $\left ( A_{(\mathcal{J})}^{(p;\omega)} \right )^{i+1} = \omega^i A_{(\mathcal{J})}^{(p;\omega)}$ for $i \in \mathbb{N}$. The result now follows similarly to Theorem \ref{thm:GENbasis}.
\end{proof}

\begin{proof}[Proof of Lemma \ref{lem:metropolis}.]
Writing $A \equiv A_{(\mathcal{J})}^{(p;\omega)}$ here for clarity, the result follows from three elementary observations: $\Delta ( A ) \ge 0$, $\max \Delta \left ( A \right ) > 0$, and $A - \Delta \left ( \Delta \left ( A \right ) \right ) \le 0$.
\end{proof}

\bibliography{LieMCMCbib}

\begin{thebibliography}{53}
\providecommand{\natexlab}[1]{#1}
\providecommand{\url}[1]{\texttt{#1}}
\expandafter\ifx\csname urlstyle\endcsname\relax
  \providecommand{\doi}[1]{doi: #1}\else
  \providecommand{\doi}{doi: \begingroup \urlstyle{rm}\Url}\fi

\bibitem[Anand et~al.(2016)Anand, Grover, Singla, and
  Singla]{anand2016contextual}
A.~Anand, A.~Grover, M.~Singla, and P.~Singla.
\newblock Contextual symmetries in probabilistic graphical models.
\newblock In \emph{Proceedings of IJCAI}, 2016.

\bibitem[Anselmi et~al.(2019)Anselmi, Evangelopoulos, Rosasco, and
  Poggio]{anselmi2019symmetry}
F.~Anselmi, G.~Evangelopoulos, L.~Rosasco, and T.~Poggio.
\newblock Symmetry-adapted representation learning.
\newblock \emph{Pattern Recognition}, 86:\penalty0 201--208, 2019.

\bibitem[Bierkens(2016)]{bierkens2016nonreversible}
J.~Bierkens.
\newblock Non-reversible {M}etropolis-{H}astings.
\newblock \emph{Statistics and Computing}, 26\penalty0 (6):\penalty0
  1213--1228, 2016.

\bibitem[Bolthausen and Bovier(2007)]{bolthausen2007spin}
E.~Bolthausen and A.~Bovier, editors.
\newblock \emph{Spin Glasses}.
\newblock Springer, 2007.

\bibitem[Boukas et~al.(2015)Boukas, Feinsilver, and Fellouris]{boukas2015lie}
A.~Boukas, P.~Feinsilver, and A.~Fellouris.
\newblock On the {L}ie structure of zero row sum and related matrices.
\newblock \emph{Random Operators and Stochastic Equations}, 23\penalty0
  (4):\penalty0 209--218, 2015.

\bibitem[Br{\'e}maud(1999)]{bremaud1999markov}
P.~Br{\'e}maud.
\newblock \emph{Markov Chains: Gibbs Fields, Monte Carlo Simulation, and
  Queues}.
\newblock Springer, 1999.

\bibitem[Brooks et~al.(2011)Brooks, Gelman, Jones, and Meng]{brooks2011mcmc}
S.~Brooks, A.~Gelman, G.~L. Jones, and X.-L. Meng, editors.
\newblock \emph{Handbook of Markov Chain Monte Carlo}.
\newblock CRC, 2011.

\bibitem[Bui et~al.(2013)Bui, Huynh, and Reidel]{bui2013automorphism}
H.~H. Bui, T.~N. Huynh, and S.~Reidel.
\newblock Automorphism groups of graphical models and lifted variational
  inference.
\newblock In \emph{Proceedings of UAI}, 2013.

\bibitem[Calderhead(2014)]{calderhead2014parallel}
B.~Calderhead.
\newblock A general construction for parallelizing {M}etropolis-{H}astings
  algorithms.
\newblock \emph{Procedings of the National Academy of Sciences (USA)},
  111\penalty0 (49):\penalty0 17408--17413, 2014.

\bibitem[Casas et~al.(2012)Casas, Murua, and Nadinic]{casas2012zassenhaus}
F.~Casas, A.~Murua, and M.~Nadinic.
\newblock Efficient computation of the {Z}assenhaus formula.
\newblock \emph{Computer Physics Communications}, 183\penalty0 (11):\penalty0
  2386--2391, 2012.

\bibitem[Ceccherini-Silberstein et~al.(2008)Ceccherini-Silberstein, Scarabotti,
  and Tolli]{ceccherini2008harmonic}
T.~Ceccherini-Silberstein, F.~Scarabotti, and F.~Tolli.
\newblock \emph{Harmonic Analysis on Finite Groups}.
\newblock Cambridge, 2008.

\bibitem[Chen and Hwang(2013)]{chen2013accelerating}
T.-L. Chen and C.-R. Hwang.
\newblock Accelerating reversible {M}arkov chains.
\newblock \emph{Statistics and Probability Letters}, 83:\penalty0 1956--1962,
  2013.

\bibitem[Chen et~al.(2012)Chen, Chen, Hwang, and Pai]{chen2012optimal}
T.-L. Chen, W.-K. Chen, C.-R. Hwang, and H.-M. Pai.
\newblock On the optimal transition matrix for {M}arkov chain {M}onte {C}arlo
  sampling.
\newblock \emph{SIAM Journal on Control and Optimization}, 50\penalty0
  (5):\penalty0 2743--2762, 2012.

\bibitem[Cohen and Welling(2016)]{cohen2016equivariant}
T.~S. Cohen and M.~Welling.
\newblock Group equivariant convolutional networks.
\newblock In \emph{Proceedings of ICML}, 2016.

\bibitem[Cohen et~al.(2018)Cohen, Geiger, K{\"o}hler, and
  Welling]{cohen2018spherical}
T.~S. Cohen, M.~Geiger, J.~K{\"o}hler, and M.~Welling.
\newblock Spherical {CNN}s.
\newblock In \emph{Proceedings of ICLR}, 2018.

\bibitem[Conrad et~al.(2018)Conrad, Davis, Marzouk, Pillai, and
  Smith]{conrad2018expensive}
P.~R. Conrad, A.~D. Davis, Y.~M. Marzouk, N.~S. Pillai, and A.~Smith.
\newblock Parallel local approximation {MCMC} for expensive models.
\newblock \emph{SIAM/ASA Journal on Uncertainty Quantification}, 6\penalty0
  (1):\penalty0 339--373, 2018.

\bibitem[Dashti and Stuart(2015)]{dashti2016inverse}
M.~Dashti and A.~M. Stuart.
\newblock The {B}ayesian approach to inverse problems.
\newblock In R.~Ghanem, D.~Higdon, and H.~Owhadi, editors, \emph{Handbook of
  Uncertainty Quantification}. Springer, 2015.

\bibitem[Dawson and Nielsen(2006)]{dawson2005solovaykitaev}
C.~M. Dawson and M.~A. Nielsen.
\newblock The {S}olovay-{K}itaev algorithm.
\newblock \emph{Quantum Information and Computation}, 6\penalty0 (1):\penalty0
  81, 2006.

\bibitem[Delmas and Jourdain(2009)]{delmas2009does}
J.-F. Delmas and B.~Jourdain.
\newblock Does waste recycling really improve the multi-proposal
  {M}etropolis--{H}astings algorithm? {A}n analysis based on control variates.
\newblock \emph{Journal of applied probability}, 46\penalty0 (4):\penalty0
  938--959, 2009.

\bibitem[Earl and Deem(2005)]{earl2005tempering}
D.~J. Earl and M.~W. Deem.
\newblock Parallel tempering: theory, applications, and new perspectives.
\newblock \emph{Physical Chemistry Chemical Physics}, 7\penalty0 (23):\penalty0
  3910--3916, 2005.

\bibitem[Frigessi et~al.(1992)Frigessi, Hwang, and Younes]{frigessi1992optimal}
A.~Frigessi, C.-R. Hwang, and L.~Younes.
\newblock Optimal spectral structure of reversible stochastic matrices, {M}onte
  {C}arlo methods and the simulation of {M}arkov random fields.
\newblock \emph{The Annals of Applied Probability}, 2\penalty0 (3):\penalty0
  610--628, 1992.

\bibitem[Guerra and Sarychev(2018)]{guerra2018stochastic}
M.~Guerra and A.~Sarychev.
\newblock On the stochastic {L}ie algebra.
\newblock \emph{https://arxiv.org/abs/1805.07299}, 2018.

\bibitem[Hilgert and Neeb(1993)]{hilgert1993lie}
J.~Hilgert and K.-H. Neeb.
\newblock \emph{Lie Semigroups and their Applications}.
\newblock Springer, 1993.

\bibitem[Horn and Johnson(2013)]{horn2013matrix}
R.~A. Horn and C.~R. Johnson.
\newblock \emph{Matrix Analysis, 2nd. ed.}
\newblock Cambridge, 2013.

\bibitem[Huang et~al.(2012)Huang, Liao, Chen, and Hwang]{huang2018optimal}
L.-J. Huang, Y.-T. Liao, T.-L. Chen, and C.-R. Hwang.
\newblock Optimal variance reduction for {M}arkov chain {M}onte {C}arlo.
\newblock \emph{SIAM Journal on Control and Optimization}, 50\penalty0
  (5):\penalty0 2743--2762, 2012.

\bibitem[Johnson(1985)]{johnson1985markov}
J.~E. Johnson.
\newblock Markov-type {L}ie groups in ${GL}(n,\mathbb{R})$.
\newblock \emph{Journal of Mathematical Physics}, 26\penalty0 (2):\penalty0
  252--257, 1985.

\bibitem[Khachiyan(2001)]{khachiyan2001transversal}
L.~Khachiyan.
\newblock Transversal hypergraphs and families of polyhedral cones.
\newblock In \emph{Advances in Convex Analysis and Global Optimization}, pages
  105--118. Springer, 2001.

\bibitem[Khachiyan et~al.(2008)Khachiyan, Boros, Borys, Elbassioni, and
  Gurvich]{khachiyan2008vertices}
L.~Khachiyan, E.~Boros, K.~Borys, K.~Elbassioni, and V.~Gurvich.
\newblock Generating all vertices of a polyhedron is hard.
\newblock \emph{Discrete and Computational Geometry}, 39\penalty0
  (1-3):\penalty0 174--190, 2008.

\bibitem[Kirillov(2008)]{kirillov2008lie}
A.~Kirillov.
\newblock \emph{An Introduction to Lie Groups and Lie Algebras}.
\newblock Cambridge, 2008.

\bibitem[Liu and Sabatti(2000)]{liu2000generalised}
J.~S. Liu and C.~Sabatti.
\newblock Generalised {G}ibbs sampler and multigrid {M}onte {C}arlo for
  {B}ayesian computation.
\newblock \emph{Biometrika}, 87\penalty0 (2):\penalty0 353--369, 2000.

\bibitem[Liu and Wu(1999)]{liu1999parameter}
J.~S. Liu and Y.~N. Wu.
\newblock Parameter expansion for data augmentation.
\newblock \emph{Journal of the American Statistical Association}, 94\penalty0
  (448):\penalty0 1264--1274, 1999.

\bibitem[Liu et~al.(2000)Liu, Liang, and Wong]{liu2000multiple}
J.~S. Liu, F.~Liang, and W.~H. Wong.
\newblock The multiple-try method and local optimization in {M}etropolis
  sampling.
\newblock \emph{Journal of the American Statistical Association}, 95\penalty0
  (449):\penalty0 121--134, 2000.

\bibitem[L{\"u}dtke et~al.(2018)L{\"u}dtke, Schr{\"o}der, Kr{\"u}ger, Bader,
  and Kirste]{ludtke2018abstractions}
S.~L{\"u}dtke, M.~Schr{\"o}der, F.~Kr{\"u}ger, S.~Bader, and T.~Kirste.
\newblock State-space abstractions for probabilistic inference: a systematic
  review.
\newblock \emph{Journal of Artificial Intelligence Research}, 63:\penalty0
  789--848, 2018.

\bibitem[Martino(2014)]{martino2014}
L.~Martino.
\newblock On multiple try schemes and the particle {M}etropolis-{H}astings
  algorithm.
\newblock \emph{https://www.vixra.org/abs/1409.0051}, 2014.

\bibitem[Martino(2018)]{martino2018MT}
L.~Martino.
\newblock A review of multiple try {MCMC} algorithms for signal processing.
\newblock \emph{Digital Signal Processing}, 75:\penalty0 134--152, 2018.

\bibitem[Martino et~al.(2018)Martino, Luengo, and M{\'\i}guez]{martino2018IRSM}
L.~Martino, D.~Luengo, and J.~M{\'\i}guez.
\newblock \emph{Independent Random Sampling Methods}.
\newblock Springer, 2018.

\bibitem[Neal(2011)]{neal2010ensemble}
R.~M. Neal.
\newblock {MCMC} using ensembles of states for problems with fast and slow
  variables such as {G}aussian process regression.
\newblock \emph{https://arxiv.org/abs/1101.0387}, 2011.

\bibitem[Niepert(2012{\natexlab{a}})]{niepert2012markov}
M.~Niepert.
\newblock Markov chains on orbits of permutation groups.
\newblock In \emph{Proceedings of UAI}, 2012{\natexlab{a}}.

\bibitem[Niepert(2012{\natexlab{b}})]{niepert2012mcmc}
M.~Niepert.
\newblock Lifted probabilistic inference: an {MCMC} perspective.
\newblock In \emph{Proceedings of StaRAI}, 2012{\natexlab{b}}.

\bibitem[Onishchik and Vinberg(1990)]{onishchik1990lie}
A.~L. Onishchik and E.~B. Vinberg.
\newblock \emph{Lie Groups and Algebraic Groups}.
\newblock Springer, 1990.

\bibitem[Panchenko(2012)]{panchenko2012sk}
D.~Panchenko.
\newblock The {S}herrington-{K}irkpatrick model: an overview.
\newblock \emph{Journal of Statistical Physics}, 149\penalty0 (2):\penalty0
  362--383, 2012.

\bibitem[Peskun(1973)]{peskun1973optimum}
P.~H. Peskun.
\newblock Optimum {M}onte-{C}arlo sampling using {M}arkov chains.
\newblock \emph{Biometrika}, 60\penalty0 (3):\penalty0 607--612, 1973.

\bibitem[Pollet et~al.(2004)Pollet, Rombouts, Van~Houcke, and
  Heyde]{pollet2004optimal}
L.~Pollet, S.~M.~A. Rombouts, K.~Van~Houcke, and K.~Heyde.
\newblock Optimal {M}onte {C}arlo updating.
\newblock \emph{Physical Review E}, 70:\penalty0 056705, 2004.

\bibitem[Poole(1995)]{poole1995stochastic}
D.~G. Poole.
\newblock The stochastic group.
\newblock \emph{American Mathematical Monthly}, 102:\penalty0 798--801, 1995.

\bibitem[Richey(2010)]{richey2010mcmc}
M.~Richey.
\newblock The evolution of {M}arkov chain {M}onte {C}arlo methods.
\newblock \emph{The American Mathematical Monthly}, 117\penalty0 (5):\penalty0
  383--413, 2010.

\bibitem[Robert et~al.(2018)Robert, Elvira, Tawn, and Wu]{robert2018mcmc}
C.~P. Robert, V.~Elvira, N.~Tawn, and C.~Wu.
\newblock Accelerating {MCMC} algorithms.
\newblock \emph{WIREs Computational Statistics}, 10:\penalty0 607--612, 2018.

\bibitem[Saloff-Coste(2001)]{saloffcoste2001random}
L.~Saloff-Coste.
\newblock Probability on groups: random walks and invariant diffusions.
\newblock \emph{Notices of the American Mathematical Society}, 48\penalty0
  (9):\penalty0 968--977, 2001.

\bibitem[Shariff et~al.(2015)Shariff, Gy{\"o}rgy, and
  Szepesv{\'a}ri]{shariff2015symmetries}
R.~Shariff, A.~Gy{\"o}rgy, and C.~Szepesv{\'a}ri.
\newblock Exploiting symmetries to construct efficient {MCMC} algorithms with
  an application to {SLAM}.
\newblock In \emph{Proceedings of AISTATS}, 2015.

\bibitem[Sumner et~al.(2012)Sumner, Fern{\'a}ndez-S{\'a}nchez, and
  Jarvis]{sumner2012liemarkov}
J.~G. Sumner, J.~Fern{\'a}ndez-S{\'a}nchez, and P.~D. Jarvis.
\newblock Lie {M}arkov models.
\newblock \emph{Journal of Theoretical Biology}, 298:\penalty0 16--31, 2012.

\bibitem[Suwa and Todo(2010)]{suwa2010detailed}
H.~Suwa and S.~Todo.
\newblock {M}arkov chain {M}onte {C}arlo method without detailed balance.
\newblock \emph{Physical Review Letters}, 105:\penalty0 120603, 2010.

\bibitem[Takahashi and Ohzeki(2016)]{takahashi2016detailed}
K.~Takahashi and M.~Ohzeki.
\newblock Conflict between fastest relaxation of a {M}arkov process and
  detailed balance condition.
\newblock \emph{Physical Review E}, 93:\penalty0 012129, 2016.

\bibitem[Van~den {B}roeck and Niepert(2015)]{vandenbroeck2015lifted}
G.~Van~den {B}roeck and M.~Niepert.
\newblock Lifted probabilistic inference for asymmetric graphical models.
\newblock In \emph{Proceedings of AAAI}, 2015.

\bibitem[Wu and Chu(2015)]{wu2015optimal}
S.-J. Wu and M.~T. Chu.
\newblock Constructing optimal transition matrix for {M}arkov chain {M}onte
  {C}arlo.
\newblock \emph{Linear Algebra and its Applications}, 487:\penalty0 184--202,
  2015.

\end{thebibliography}
\bibliographystyle{abbrvnat}

\end{document}